 \numberwithin{equation}{section}
\newtheorem{theorem}{Theorem}[section]
\newtheorem{lemma}[theorem]{Lemma}
\newtheorem{proposition}[theorem]{Proposition}
\theoremstyle{definition}
\newtheorem{definition}[theorem]{Definition}
\theoremstyle{remark}
\begin{document}

%
%
%
%
%
%
%
%
%

\setlength{\headheight}{26pt}
\title[Positive Toeplitz Operators]
 {Positive Toeplitz Operators from \\ a Weighted Harmonic Bloch Space \\ into Another}

\author[\"{O}mer Faruk Do\u{g}an]{\"{O}mer Faruk Do\u{g}an }

\subjclass{Primary  47B35; Secondary 31B05}

\keywords{Toeplitz operator, Harmonic Bergman-Besov space, Weighted Harmonic Bloch Space, Carleson measure}

\date{January 1, 2004}
\address{Department of Mathematics, Tek$\dot{\hbox{\i}}$rda\u{g} \\ Namik Kemal University,
59030 \\ Tek$\dot{\hbox{\i}}$rda\u{g}, Turkey,}

\email{ofdogan@nku.edu.tr}

\begin{abstract}
We define positive Toeplitz operators between weighted harmonic Bloch spaces $b^\infty_\alpha$ on the unit ball of $\mathbb{R}^n$ for the full range of parameter $\alpha\in\mathbb{R}$. We give characterizations of bounded and compact Toeplitz operators taking one weighted harmonic  Bloch space  into a\-not\-her in terms of Carleson and vanishing Carleson measures.
\end{abstract}

\maketitle

\section{Introduction}\label{s-introduction}

Let $n\geq 2$ be an integer and $\mathbb{B}=\mathbb{B}_{n}$ be the open unit ball in $\mathbb{R}^n$.
Let $\nu$  be the Lebesgue volume  measure on $\mathbb{B}$ normalized so that $\nu(\mathbb{B})=1$.
For  $\alpha\in \mathbb{R}$, we define the weighted volume measures $\nu_\alpha$ on $\mathbb{B}$ by
\[
d\nu_\alpha(x)=\frac{1}{V_\alpha} (1-|x|^2)^\alpha d\nu(x).
\]
These measures are finite when $\alpha>-1$ and in this case we choose $V_\alpha$ so that $\nu_\alpha(\mathbb{B})=1$. Naturally $V_0=1$.  For $\alpha\leq -1$, we set $V_\alpha=1$. We denote the Lebesgue classes with respect to $\nu_\alpha$ by $L^p_{\alpha}$,  $0<p<\infty$, and the corresponding norms by $\|\cdot\|_{L^p_{\alpha}}$.

Let $h(\mathbb{B})$ be the space of all complex-valued harmonic functions on $\mathbb{B}$ with the topology of uniform convergence on compact subsets.  For $0<p<\infty$ and $\alpha>-1$, the harmonic weighted Bergman space $b^p_\alpha$ is defined by $b^p_\alpha=  L^p_\alpha \cap h(\mathbb{B})$ endowed with the norm
$\|\cdot\|_{L^p_{\alpha}}$.  The subfamily $b^2_\alpha$ is a  reproducing kernel Hilbert space with respect to the inner product $[f,g]_{b^2_\alpha}=\int_{\mathbb{B}}f\overline{g} \, d\nu_{\alpha}(x)$ and with the reproducing kernel $R_\alpha(x,y)$  such that $f(x)=[f,R_\alpha(x,\cdot)]_{b^2_\alpha}$ for every $f\in b^2_\alpha$ and $x\in \mathbb{B}$. It is well-known that $R_\alpha$ is real-valued  and $R_\alpha(x,y)=R_\alpha(y,x)$. The homogeneous expansion of $R_\alpha(x,y)$ is given in the $\alpha>-1$ part of the formulas (\ref{Rq - Series expansion}) and (\ref{gamma k q-Definition}) below (see \cite{DS}, \cite{GKU2}).

The well-known harmonic Bloch space $b$ is the space of all $f\in h(\mathbb{B})$ such that
\begin{equation*}
\sup_{x\in \mathbb{B}} (1-|x|^2) |\nabla f(x)| < \infty.
\end{equation*}
We denote by $L^{\infty}$ the Lebesgue class of essentially bounded functions on $\mathbb{B}$, and for $\alpha \in \mathbb{R}$ we define
\begin{equation*}
L^{\infty}_\alpha = \{\varphi : (1-|x|^2)^{\alpha} \varphi(x) \in L^{\infty} \},
\end{equation*}
so that $L^{\infty}_0 = L^{\infty}$. The norm on $L^{\infty}_\alpha$ is
\begin{equation*}
\|\varphi\|_{L^{\infty}_\alpha} = \|(1-|x|^2)^\alpha \varphi(x)\|_{L^\infty}.
\end{equation*}
 For $\alpha>0$, the weighted harmonic Bloch space $b_\alpha$ is $h(\mathbb{B})\cap L^\infty_\alpha$ endowed with the norm $ \|\cdot\|_{L^{\infty}_\alpha}$.

For $\alpha>0$, the weighted harmonic Bloch space $b^\infty_\alpha$  is naturally imbedded in  $  L^\infty_\alpha$ by the inclusion $i$.
For $\alpha>0$, the harmonic Bergman projection  $ Q_\alpha:L^\infty_\alpha \to b^\infty_\alpha$ is given by the integral operator

\begin{equation}\label{orthogonal projection}
  Q_\alpha f(x)= \frac{1}{V_\alpha} \int_{\mathbb{B}} R_\alpha(x,y) f(y) (1-|y|^2)^\alpha d\nu(y) \quad (f\in L^\infty_\alpha ).
\end{equation}
This integral operator plays a major role in the theory of weighted harmonic
Bloch spaces and the question when the projection $ Q_\alpha:L^\infty_\beta \to b^\infty_\beta$  is bounded is studied in many sources such as \cite{CKY1,JP,L} for $\beta =0$ and \cite{RK} with a different integral
representation valid for $\beta >-1$.  Then one defines the Toeplitz operator  ${_{\alpha}}T_{\phi} :b^\infty_\alpha \to b^\infty_\alpha$ with symbol $\phi\in L^{1}$ by ${_{\alpha}}T_{\phi}=Q_{\alpha}M_{\phi}i$, where $M_{\phi}$ is the operator of multiplication by $\phi$. Let $\mu$ be a finite complex Borel measure on $\mathbb{B}$. The Toeplitz operator ${_{\alpha}}T_{\mu}$ with symbol $\mu$ is defined by
\[
{_{\alpha}}T_{\mu}f(x) = \frac{1}{V_\alpha}\int_{\mathbb{B}} R_\alpha(x,y)  f(y) (1-|y|^2)^\alpha  d\mu(y)
\]
for $f \in L^{\infty}_{\alpha}$. The operator ${_{\alpha}}T_{\mu}$ is more general and reduces to ${_{\alpha}}T_{\phi}$  when $d\mu=\phi d\nu$. Toeplitz operators have been studied extensively on the harmonic Bergman spaces by many
authors. Especially, positive symbols of bounded and compact Toeplitz operators are completely characterized in term of Carleson measures as in \cite{M},\cite{M2} on the ball and in \cite{CLN1} on smoothly bounded domains. Toeplitz operators from a harmonic Bergman space into another are considered and positive symbols of bounded and compact Toeplitz operators are characterized in \cite{CLN2} on smoothly bounded domains and in \cite{CKY} on the half space.

The weighted harmonic Bergman $b^p_\alpha$  and Bloch $b^\infty_\alpha$ spaces
can be extended to the whole range $\alpha \in \mathbb{R}$. These are studied in detail in \cite{GKU2} and \cite{DU1}, respectively. We call the extended family $b^p_\alpha$ $(\alpha \in \mathbb{R})$  harmonic Bergman-Besov spaces and the corresponding reproducing kernels $R_\alpha(x,y)$ $(\alpha \in \mathbb{R})$  harmonic Bergman-Besov kernels. The homogeneous expansion of  $R_\alpha(x,y)$ can be expressed in terms of zonal harmonics
\begin{equation}\label{Rq - Series expansion}
R_\alpha(x,y)=\sum_{k=0}^{\infty} \gamma_k(\alpha) Z_k(x,y) \quad  (\alpha\in \mathbb{R}, \, x,y\in \mathbb{B}),
\end{equation}
where (see \cite[Theorem 3.7]{GKU1}, \cite[Theorem 1.3]{GKU2})

\begin{equation}\label{gamma k q-Definition}
        \gamma_k(\alpha):= \begin{cases}
         \dfrac{(1+n/2+\alpha)_k}{(n/2)_k}, &\text{if $\, \alpha > -(1+n/2)$}; \\
         \noalign{\medskip}
         \dfrac{(k!)^2}{(1-(n/2+\alpha))_k (n/2)_k}, &\text{if $\, \alpha \leq -(1+n/2)$},
\end{cases}
\end{equation}
and $(a)_b$ is the Pochhammer symbol. For definition and details about $Z_k(x,y)$, see \cite[Chapter 5]{ABR}.

The spaces $b^\infty_\alpha$  can  be defined by using the radial differential operators $D^t_s$ $(s,t \in \mathbb{R})$ introduced in \cite{GKU1} and \cite{GKU2}. These operators are defined in terms of reproducing kernels of harmonic Besov spaces and still mapping  $h(\mathbb{B})$ onto itself.  The properties of $D^t_s$ will be reviewed in Section \ref{s-preliminaries}. Consider the linear transformation $I_{s}^{t}$ defined for $f\in h(\mathbb{B})$ by

\begin{equation*}
  I^t_s f(x) := (1-|x|^2)^t D^t_s f(x).
\end{equation*}

\begin{definition}\label{definition of the h B-B space}
For  $\alpha \in \mathbb{R}$, we define the weighted harmonic Bloch space $b^\infty_\alpha$ to consist of all $f\in h(\mathbb{B})$ for which $ I^t_s f$
belongs to  $L^\infty_\alpha$ for some  $s,t$ satisfying (see \cite{DU1} )

\begin{equation}\label{alpha+pt}
 \alpha+t>0.
 \end{equation}
The quantity
\[
\|f\|_{b^\infty_\alpha} = \| I^t_s f\|_{L^\infty_\alpha}=\sup_{x\in \mathbb{B}}\, (1-|x|^2)^{\alpha+t} |D^t_s f(x)|<\infty,
\]
defines a norm on $b^\infty_\alpha$ for any such $s,t\in \mathbb{R}$.
\end{definition}
It is well-known that the above definition is independent of $s,t$ under (\ref{alpha+pt}),  and the norms  on a given space are all equivalent. Thus for a given pair $s,t$,  $ I^t_s$ isometrically imbeds $b^\infty_\alpha$ into $L^\infty_\alpha$ if and only if (\ref{alpha+pt}) holds.

Harmonic Bergman-Besov projections $Q_{s}$ that map Lebesgue classes boundedly onto weighted Bloch spaces $b^\infty_\alpha$ can be precisely identified as  in the case of $\alpha>0$ by
\begin{equation}\label{two}
s>\alpha-1.
\end{equation}
Then $I_{s}^{t}$ is a right inverse to $Q_{s}$. This is all done in \cite{DU1}.

Now let $\alpha\in \mathbb{R}$,  s and t satisfing (\ref{two}) and (\ref{alpha+pt}), and a measurable function $\phi$ on $\mathbb{B}$ be given. Harmonic Bergman-Besov projections $Q_{s}$ forces us to define Toeplitz operators on all $b^\infty_\alpha $ as follows.  We define the Toeplitz operator ${_{s,t}}T_{\phi}: b^\infty_\alpha \to b^\infty_\alpha $ with symbol $\phi$ by ${_{s,t}}T_{\phi}=Q_{s}M_{\phi}I_{s}^{t}$. Explicitly,
\[
{_{s,t}}T_{\phi}f(x) = \int_{\mathbb{B}} R_s(x,y) \phi(y) I^t_s f(y)  d\nu_{s}(y) \quad (f\in b^\infty_\alpha).
\]
We see that ${_{s,t}}T_{\phi}$ makes sense if $\phi \in L^{1}_{s-\alpha}$.
When $\alpha>0$, one can choose $t=0$ and a value of $s$ satisfying (\ref{two})  is $s=\alpha$. Then $I_{\alpha}^{0}$ is inclusion, and  ${_{s,t}}T_{\phi}$  reduces to the classical Toeplitz operator ${_{\alpha}}T_{\phi}=Q_{\alpha}M_{\phi}i$ on the harmonic weighted Bloch spaces  $b^\infty_\alpha $, $\alpha>0$. We use the term  classical to mean a Toeplitz operator with $i=I_{\alpha}^{0}$.   It is possible to take $s \neq \alpha$ also when $\alpha>-1$. So we have more general Toeplitz operators defined via $I_{s}^{t}$  strictly on harmonic Bloch spaces too. It turns out that the properties of Toeplitz operators studied in this paper are independent of $s,t$ under (\ref{two}) and (\ref{alpha+pt}).

Having obtained the integral form for ${_{s,t}}T_{\phi}$, we can now define Toeplitz operators on $b^\infty_\alpha $ with symbol $\mu$. Let $\alpha$, and $s$ and $t$ satisfing (\ref{two}) and (\ref{alpha+pt}) be given. We let
\[
d\kappa(y)=(1-|y|^{2})^{s+t}d\mu(y)
\]
and define
\begin{align*}
{_{s,t}}T_{\mu}f(x) &= \frac{1}{V_{s}} \int_{\mathbb{B}} R_s(x,y) I^t_s f(y) (1-|y|^{2})^{s}d\mu(y)\\
&=\frac{1}{V_{s}} \int_{\mathbb{B}} R_s(x,y) D^t_s f(y) d\kappa(y).
\end{align*}
The operator ${_{s,t}}T_{\mu}$ is more general and reduces to ${_{s,t}}T_{\phi}$ when $d\mu=\phi d\nu$.   It makes sense when
\[
d\psi(y)=(1-|y|^{2})^{-(\alpha+t)}d\kappa(y)=(1-|y|^{2})^{s-\alpha}d\mu(y)
\]
is finite. Note that $\mu$ need not be finite in conformity with that $\alpha$ is unrestricted.

 The \textit{holomorphic} analogues of our characterizations for Dirichlet spaces have been obtained   in \cite{AK}; these results are all concerned with Toeplitz o\-pe\-ra\-tors from a Dirichlet space into itself.
Recently, in \cite{DOG2}, positive symbols of bounded and compact general Toeplitz operators between  harmonic Bergman-Besov spaces are  completely characterized in term of Carleson measures. In this paper, we consider the Toeplitz operator ${_{s,t}}T_{\mu}$ with positive symbol and characterize those that are bounded and compact from  a weighted harmonic Bloch space $b^{\infty}_{\alpha_{1}}$ into another $b^{\infty}_{\alpha_{2}}$ for $\alpha_{1},\alpha_{2} \in \mathbb{R}$. Our main tool is Carleson measure.

 Let $\mu$ be a positive Borel measure on $\mathbb{B}$. For $\alpha>-1$, we say that $\mu$ is a   $\alpha$-Carleson measure if  the inclusion $i: b^p_\alpha\to L^p(\mu)$ is bounded, that is, if
\[
\left(\int_{\mathbb{B}} |f(x)|^p\, d\mu(x) \right)^{1/p} \lesssim \|f\|_{b^p_\alpha}, \qquad (f\in b^p_\alpha).
\]
We also denote
\begin{equation*}
 \| \mu\|_{\alpha}=\sup_{f\in b^p_\alpha, \|f\|_{b^p_\alpha}\leq1} \int_{\mathbb{B}} |f(x)|^p\, d\mu(x).
\end{equation*}
As is common with Carleson measure theorems, the property of being an $\alpha$-Carleson measure is
independent of $p$, because  Theorem \ref{Carleson-Besov2} is true for any $p$.
 However, it depends on $\alpha>-1$. So for a fixed $\alpha$, an $\alpha$-Carleson
measure for one $b^p_\alpha$ is a Carleson measure for all $b^p_\alpha$  with the same $\alpha$.
We can now state our main result.

\begin{theorem}\label{Theorem-1}
Let  $\alpha_{1},\alpha_{2} \in \mathbb{R}$. Suppose that $\alpha_{1}+t>0$, $\alpha_{2}+t>0$ and

\begin{equation} \label{Equ-1}
s>\alpha_{i}-1, \qquad i=1,2.
\end{equation}
Let
\[
 \gamma=s+t+\alpha_{1}-\alpha_{2}.
\]
Let $\mu$ be a positive Borel measure on $\mathbb{B}$ and $d\kappa(y)=
(1-|x|^{2})^{s+t}d\mu(y)$. Then the following statements are equivalent:
\begin{enumerate}
\item[(i)] ${_{s,t}}T_{\mu}$ is bounded  from  $b^{\infty}_{\alpha_{1}}$ to $b^{\infty}_{\alpha_{2}}$.
\item[(ii)] $\kappa$ is a $\gamma$-Carleson measure.
\end{enumerate}
\end{theorem}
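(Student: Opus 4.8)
The plan is to measure both the domain and the target norm with a single pair $(s,t)$: since the hypotheses give $\alpha_1+t>0$ and $\alpha_2+t>0$ simultaneously, Definition \ref{definition of the h B-B space} lets me write $\|f\|_{b^\infty_{\alpha_1}}=\sup_y(1-|y|^2)^{\alpha_1+t}|D^t_sf(y)|$ and, for $g={_{s,t}}T_\mu f$, $\|g\|_{b^\infty_{\alpha_2}}=\sup_x(1-|x|^2)^{\alpha_2+t}|D^t_sg(x)|$. Because $R_s(\cdot,y)$ is harmonic I differentiate under the integral sign and invoke the kernel identity $D^t_{s,x}R_s(x,y)=R_{s+t}(x,y)$ together with the standard pointwise estimate $|R_{s+t}(x,y)|\lesssim[x,y]^{-(n+s+t)}$ from \cite{GKU2}, where $[x,y]$ is the usual quantity comparable to $1-|x|^2$ for $y$ in a Bergman-metric ball about $x$; this estimate is valid since $s+t>-1$, which follows from $s>\alpha_1-1$ and $\alpha_1+t>0$. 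Using $|D^t_sf(y)|\le\|f\|_{b^\infty_{\alpha_1}}(1-|y|^2)^{-(\alpha_1+t)}$, the entire theorem reduces to controlling the weighted Berezin-type integral
\[
 J(x):=(1-|x|^2)^{\alpha_2+t}\int_{\mathbb{B}}\frac{(1-|y|^2)^{-(\alpha_1+t)}}{[x,y]^{n+s+t}}\,d\kappa(y),
\]
since then $\|{_{s,t}}T_\mu f\|_{b^\infty_{\alpha_2}}\lesssim\|f\|_{b^\infty_{\alpha_1}}\sup_xJ(x)$.

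For (ii)$\Rightarrow$(i) I show $\sup_xJ(x)<\infty$. First I note $\gamma>-1$; indeed $\gamma=(s-\alpha_2)+(\alpha_1+t)>-1$, so the $\gamma$-Carleson hypothesis is meaningful and, by the geometric characterization of Carleson measures, equivalent to $\kappa(E_r(a))\lesssim(1-|a|^2)^{n+\gamma}$ on Bergman-metric balls $E_r(a)$. Splitting $\mathbb{B}$ into a Whitney-type decomposition adapted to these balls and using that $[x,y]$ and $1-|y|^2$ are comparable to $1-|a|^2$ on each ball, the sum defining $J(x)$ is dominated by $(1-|x|^2)^{\alpha_2+t}\int_{\mathbb{B}}(1-|y|^2)^{s-\alpha_2}[x,y]^{-(n+s+t)}\,d\nu(y)$, where I used $\gamma-(\alpha_1+t)=s-\alpha_2$. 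The standard integral estimate applies \emph{precisely} because $s-\alpha_2>-1$ and $\alpha_2+t>0$ (both are hypotheses), yielding $\int_{\mathbb{B}}(1-|y|^2)^{s-\alpha_2}[x,y]^{-(n+s+t)}\,d\nu(y)\approx(1-|x|^2)^{-(\alpha_2+t)}$, whence $J(x)\lesssim1$ uniformly in $x$.

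For (i)$\Rightarrow$(ii) I test on a family of functions. For fixed $a\in\mathbb{B}$ I construct $f_a\in b^\infty_{\alpha_1}$ with $\|f_a\|_{b^\infty_{\alpha_1}}\lesssim1$ whose image $D^t_sf_a$ is nonnegative and comparable to $(1-|a|^2)^{-(\alpha_1+t)}$ throughout $E_r(a)$ for a fixed small radius $r$; such a family comes from suitably normalized reproducing kernels. Evaluating the range-norm lower bound $\|{_{s,t}}T_\mu f_a\|_{b^\infty_{\alpha_2}}\ge(1-|a|^2)^{\alpha_2+t}|D^t_s{_{s,t}}T_\mu f_a(a)|$ and restricting the defining integral to $E_r(a)$, where $D^t_{s,a}R_s(a,y)=R_{s+t}(a,y)>0$ and $\approx(1-|a|^2)^{-(n+s+t)}$, the absence of cancellation gives $(1-|a|^2)^{\alpha_2+t}|D^t_s{_{s,t}}T_\mu f_a(a)|\gtrsim(1-|a|^2)^{-(n+\gamma)}\kappa(E_r(a))$. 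Boundedness then forces $\kappa(E_r(a))\lesssim(1-|a|^2)^{n+\gamma}$, which is exactly the $\gamma$-Carleson condition.

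I expect the necessity direction to be the main obstacle: producing test functions $f_a$ that simultaneously have uniformly bounded $b^\infty_{\alpha_1}$-norm and a sign-definite, non-oscillating image $D^t_sf_a$ on $E_r(a)$, so that the lower bound survives without cancellation, and doing so uniformly for all $\alpha_1\in\mathbb{R}$ requires the two-regime kernel asymptotics encoded in (\ref{gamma k q-Definition}). A secondary technical point is verifying the kernel identity and the comparability $|R_{s+t}(a,y)|\approx(1-|a|^2)^{-(n+s+t)}$ on $E_r(a)$ across the full parameter range, and checking that the convergence thresholds in the integral estimate coincide exactly with the stated hypotheses $s>\alpha_i-1$ and $\alpha_i+t>0$.
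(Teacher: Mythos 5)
Your sufficiency direction ((ii)$\Rightarrow$(i)) is correct and takes a genuinely different route from the paper. You argue directly: differentiation under the integral and (\ref{**}) turn the kernel into $R_{s+t}(x,y)$, the pointwise bound of Lemma \ref{R-alpha} applies since $s+t>-1>-n$, and the Carleson hypothesis, localized on the covering of Lemma \ref{ak} via Lemmas \ref{x-y-close} and \ref{Bracket-Hyper}, lets you replace $d\kappa(y)$ by $(1-|y|^2)^{s-\alpha_2}\,d\nu(y)$ up to constants; the Forelli--Rudin-type estimate $\int_{\mathbb{B}}(1-|y|^2)^{s-\alpha_2}[x,y]^{-(n+s+t)}d\nu(y)\sim(1-|x|^2)^{-(\alpha_2+t)}$ (valid exactly under $s-\alpha_2>-1$ and $\alpha_2+t>0$) then gives $\sup_x J(x)<\infty$. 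The paper instead proves this implication by duality: it identifies $(b^1_{s-\alpha_2})^*\cong b^\infty_{\alpha_2+t}$ (Theorem \ref{Theorem-Dual-of-b1q}), computes $[h,{_{s+t}}T_{\kappa}f]_{b^2_{s+t}}=\frac{1}{V_s}\int h\bar f\,d\kappa$ by Fubini and the reproducing formula, and bounds this pairing by Proposition \ref{product-carleson}. Your route avoids the duality machinery at the cost of invoking the standard integral estimate, which the paper never states but which is classical (see \cite{GKU2}); both are legitimate, and your exponent bookkeeping ($\gamma-(\alpha_1+t)=s-\alpha_2$, $\gamma>-1$) is right.

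The necessity direction ((i)$\Rightarrow$(ii)), however, has a genuine gap, and it is exactly at the step you yourself flag. You lower-bound $|D^t_s({_{s,t}}T_\mu f_a)(a)|$ by the contribution of $E_r(a)$ alone, citing ``absence of cancellation,'' but for a test family specified only by $\|f_a\|_{b^\infty_{\alpha_1}}\lesssim 1$ and $D^t_sf_a\geq 0$, $\sim(1-|a|^2)^{-(\alpha_1+t)}$ on $E_r(a)$, that step is not valid: $R_{s+t}(a,\cdot)$ is sign-definite only locally (Lemma \ref{Kernel-two-sided} is a statement on $E_\delta(a)$ only), so the integral over $\mathbb{B}\setminus E_r(a)$ can cancel, or even annihilate, the part you keep. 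What is needed is global nonnegativity of the full integrand $R_{s+t}(a,y)\,D^t_sf_a(y)\,d\kappa(y)$, and the way to get it --- which is precisely the paper's argument --- is to tie the test function, the differentiation, and the evaluation point to the same $a$: take $f_a=(1-|a|^2)^{n+s-\alpha_1}R_s(\cdot,a)$, so that $D^t_sf_a=(1-|a|^2)^{n+s-\alpha_1}R_{s+t}(\cdot,a)$ and
\begin{equation*}
D^t_s\bigl({_{s,t}}T_\mu f_a\bigr)(a)=\frac{(1-|a|^2)^{n+s-\alpha_1}}{V_s}\int_{\mathbb{B}}\bigl|R_{s+t}(a,y)\bigr|^2\,d\kappa(y)\geq 0,
\end{equation*}
a perfect square. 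Only then is discarding $\mathbb{B}\setminus E_r(a)$ legitimate; Lemma \ref{Kernel-two-sided} gives the two-sided size on $E_r(a)$, your exponent computation yields $\kappa(E_r(a))\lesssim(1-|a|^2)^{n+\gamma}$, and Theorem \ref{Carleson-Besov2} concludes. So your choice of family (``normalized reproducing kernels'') is the right one, but the property that makes the lower bound survive is not the local positivity you list --- it is the square structure obtained by evaluating at the kernel's own base point, and without stating this the key inequality is unjustified.
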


 In order to characterize compact positive Toeplitz operators ${_{s,t}}T_{\mu}$ from  weighted harmonic Bloch spaces $b^{\infty}_{\alpha_{1}}$ into another $b^{\infty}_{\alpha_{2}}$ for all  $\alpha_{1},\alpha_{2} \in \mathbb{R}$, we introduce the notion of vanishing $\alpha$-Carleson measures. We say that $\mu\geq 0$ is a vanishing $\alpha$-Carleson measure if for any sequence $\{f_{k}\}$ in $b^{p}_{\alpha}$ with $f_{k}\to 0$ uniformly on each compact subset of $\mathbb{B}$ and $ \|f_{k}\|_{b^{p}_{\alpha}}\leq 1$,

\[
\lim_{k\to \infty}\int_{\mathbb{B}} |f_{k}(x)|^p\, d\mu(x) =0.
\]
One can see from Theorem \ref{Carleson-Besov3}  that the notion of vanishing $\alpha$-Carleson measures
on $b^{p}_{\alpha}$ is also independent of $p$.
\begin{theorem}\label{Theorem-2}
Let  $\alpha_{1},\alpha_{2} \in \mathbb{R}$. Let $s,t$, and $\gamma$ be as in Theorem \ref{Theorem-1}.
Let $\mu$ be a positive Borel measure on $\mathbb{B}$ and $d\kappa(y)=
(1-|x|^{2})^{s+t}d\mu(y)$. Then the following statements are equivalent:
\begin{enumerate}
\item[(i)] ${_{s,t}}T_{\mu}$ is compact  from  $b^{\infty}_{\alpha_{1}}$ to $b^{\infty}_{\alpha_{2}}$.
\item[(ii)] $\kappa$ is a vanishing $\gamma$-Carleson measure.
\end{enumerate}
\end{theorem}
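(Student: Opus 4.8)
The plan is to organize everything around the standard sequential criterion for compactness on these function spaces: since bounded subsets of $b^\infty_{\alpha_1}$ are normal families, a bounded operator $T : b^\infty_{\alpha_1} \to b^\infty_{\alpha_2}$ is compact if and only if $\|T f_k\|_{b^\infty_{\alpha_2}} \to 0$ for every sequence $\{f_k\}$ that is bounded in $b^\infty_{\alpha_1}$ and converges to $0$ uniformly on compact subsets of $\mathbb{B}$. This is precisely the hypothesis appearing in the definition of a vanishing $\gamma$-Carleson measure, so the argument will run parallel to that of Theorem \ref{Theorem-1}, with ``bounded'' replaced by ``vanishing'' throughout.

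For the implication (ii) $\Rightarrow$ (i) I would use a boundary/interior decomposition. For $0<r<1$ split $\mu = \mu_r + \mu^r$, where $\mu_r$ is the restriction to $r\mathbb{B}$ and $\mu^r$ the restriction to $\mathbb{B}\setminus r\mathbb{B}$, so that ${_{s,t}}T_\mu = {_{s,t}}T_{\mu_r} + {_{s,t}}T_{\mu^r}$ and correspondingly $\kappa = \kappa_r + \kappa^r$. The piece $\kappa_r$ is a finite measure with compact support, so ${_{s,t}}T_{\mu_r}$ maps bounded sets to normal families and is compact; concretely, choosing a derivative order $c$ with $\alpha_2+c>0$, the boundary factor $(1-|x|^2)^{\alpha_2+c}$ in the norm together with $D^t_s f_k \to 0$ uniformly on $r\mathbb{B}$ forces $\|{_{s,t}}T_{\mu_r} f_k\|_{b^\infty_{\alpha_2}}\to 0$ on bounded sequences tending to $0$ uniformly on compacta. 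For the tail, Theorem \ref{Theorem-1} applied to $\mu^r$ gives $\|{_{s,t}}T_{\mu^r}\| \lesssim \|\kappa^r\|_\gamma$, and the vanishing hypothesis (via the averaging-function characterization behind Theorem \ref{Carleson-Besov3}) yields $\|\kappa^r\|_\gamma \to 0$ as $r\to 1$. Hence ${_{s,t}}T_\mu$ is a norm limit of compact operators, so it is itself compact.

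For (i) $\Rightarrow$ (ii) I would argue by contraposition using test functions that concentrate at the boundary. If $\kappa$ fails to be a vanishing $\gamma$-Carleson measure, the characterization produces points $a_k \to \partial\mathbb{B}$ along which the averaging function $\kappa(E(a_k))/(1-|a_k|^2)^{n+\gamma}$, where $E(a)$ denotes the Carleson region at $a$, stays bounded below. I would then attach to each $a_k$ a normalized kernel-type function $f_{a_k}\in b^\infty_{\alpha_1}$ that is bounded in norm and tends to $0$ uniformly on compacta, and estimate $\|{_{s,t}}T_\mu f_{a_k}\|_{b^\infty_{\alpha_2}}$ from below by evaluating the defining supremum at $x=a_k$. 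Using the pointwise bounds for $R_s$ and its radial derivatives together with $|D^t_s f_{a_k}(y)| \lesssim \|f_{a_k}\|_{b^\infty_{\alpha_1}}(1-|y|^2)^{-(\alpha_1+t)}$, and a Forelli--Rudin type estimate for the resulting integral against $\kappa$, one checks that the exponent $\gamma = s+t+\alpha_1-\alpha_2$ is exactly the one making the boundary weights cancel, so that the lower bound reproduces the averaging function up to a constant. Thus $\|{_{s,t}}T_\mu f_{a_k}\|_{b^\infty_{\alpha_2}}$ stays bounded below while $f_{a_k}\to 0$ uniformly on compacta, contradicting compactness through the sequential criterion.

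The routine integral estimates are the same ones already used for Theorem \ref{Theorem-1}, so they can be reused essentially verbatim. The genuinely delicate points, and where I expect the main obstacle, lie in the (i) $\Rightarrow$ (ii) direction: producing test functions in the non-Hilbert, non-separable space $b^\infty_{\alpha_1}$ that are simultaneously uniformly bounded in Bloch norm, convergent to $0$ uniformly on compacta, and sharp enough to saturate the lower bound for the averaging function; and matching the sequence-based definition of a vanishing $\gamma$-Carleson measure with the geometric averaging-function characterization of Theorem \ref{Carleson-Besov3} that both directions rely on. Verifying that $\|\kappa^r\|_\gamma\to 0$ as $r\to 1$ for a vanishing Carleson $\kappa$ is the corresponding technical input for (ii) $\Rightarrow$ (i).
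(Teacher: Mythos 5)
Your proposal is correct and takes essentially the same approach as the paper: the same sequential compactness criterion, the same boundary-concentrated kernel test functions (the paper uses $f_k(x)=(1-|a_k|^2)^{n+s-\alpha_1}R_{s+t}(x,a_k)$, arguing directly that $\widehat{\kappa}_{\gamma,\delta}(a_k)\lesssim\|{_{s+t}}T_{\kappa}f_k\|_{b^\infty_{\alpha_2+t}}\to 0$ rather than by contraposition), and the same interior/boundary splitting of the measure with the key input $\|\kappa_r\|_{\gamma}\to 0$ for the converse. The only organizational differences are that the paper first reduces to the classical operator ${_{s+t}}T_{\kappa}$ via the intertwining relation (Theorem~\ref{intertwining relation}), performs the splitting inside the duality estimate $\sup_{\|h\|_{b^1_{\alpha'_2}}\leq 1}\int_{\mathbb{B}}|h||f_k|\,d\kappa$ rather than writing ${_{s,t}}T_\mu$ as a norm limit of compact operators, and the lower bound you need in (i)$\Rightarrow$(ii) comes from the two-sided kernel estimate on pseudohyperbolic balls (Lemma~\ref{Kernel-two-sided}) rather than a Forelli--Rudin type estimate.
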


The proofs of our results are inspired by the work of Pau and Zhao in \cite{PZ}, where bounded and compact  classical Toeplitz
operators between holomorphic weighted Bergman spaces are characterized.

The paper is organized as follows. The notation and some preliminary results
are summarized in Section  \ref{s-preliminaries}.   We will recall various characterizations of (vanishing) $\alpha$-Carleson measures for weighted harmonic Bergman spaces in Section \ref{s-carleson}.
Section \ref{proof1} is devoted to the proof of our main results, Theorem \ref{Theorem-1} and \ref{Theorem-2}.

In the following for two positive expressions $X$ and $Y$ we write $X\lesssim Y$ if there exists a positive constant $C$, whose exact value is inessential, such that $X\leq CY$. If both $X\lesssim Y$ and $Y\lesssim X$, we write $X\sim Y$.
\section{Preliminaries}\label{s-preliminaries}

In this section we collect some known facts that will be used  throughout the paper.

The Pochhammer symbol $(a)_b$ is defined by
\[
(a)_b=\frac{\Gamma(a+b)}{\Gamma(a)},
\]
when $a$ and $a+b$ are off the pole set $-\mathbb{N}$ of the gamma function. By Stirling formula,
\begin{equation}\label{Stirling}
\frac{(a)_c}{(b)_c} \sim c^{a-b} \quad (c\to\infty).
\end{equation}

A harmonic function $f$ on $\mathbb{B}$ has a homogeneous expansion, that is, there exist homogeneous harmonic polynomials $f_k$ of degree $k$ such that $f(x)=\sum_{k=0}^\infty f_k(x)$. The series uniformly and absolutely converges on compact subsets of $\mathbb{B}$.

\subsection{Pseudohyperbolic metric}

The canonical M\"obius transformation on $\mathbb{B}$ that exchanges $a$ and $0$ is
\[
\varphi_a(x) = \frac{(1-|a|^2) (a-x) + |a-x|^2 a}{[x,a]^2}.
\]
Here the bracket $[x,a]$ is defined by
\[
[x,a]=\sqrt{1-2 x\cdot a+|x|^2 |a|^2},
\]
where $x\cdot a$ denotes the inner product of $x$ and $a$ in $\mathbb{R}^n.$ The pseudohyperbolic distance between $x,y\in \mathbb{B}$ is
\[
\rho(x,y)=|\varphi_x(y)|=\frac{|x-y|}{[x,y]}.
\]

For a proof of the following lemma see \cite[Lemma 2.2]{CKL}.

\begin{lemma}\label{Inequality}
Let $a,x,y\in \mathbb{B}$. Then

\[
\frac{1-\rho(x,y)}{1+\rho(x,y)} \leq \frac{[x,a]}{[y,a]} \leq \frac{1+\rho(x,y)}{1-\rho(x,y)}.
\]
\end{lemma}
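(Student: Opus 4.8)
The plan is to prove only the upper estimate $\dfrac{[x,a]}{[y,a]}\le\dfrac{1+\rho}{1-\rho}$, where I abbreviate $\rho=\rho(x,y)$; the lower estimate then follows at once by interchanging the roles of $x$ and $y$, since $\rho$ is symmetric and this swap replaces the ratio by its reciprocal. I may also assume $x\neq y$, the case $x=y$ being trivial.

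First I would record the elementary geometric meaning of the bracket: for $a\neq 0$ a direct expansion shows
\[
[x,a]=\bigl|\,|a|\,x-a/|a|\,\bigr|,
\]
so that $[x,a]$ is a genuine Euclidean distance (and $[x,0]=1$). Two consequences follow immediately from the triangle inequality in $\mathbb{R}^n$. Applying the reverse triangle inequality to the points $|a|x$, $|a|y$ and $a/|a|$ gives $\bigl|\,[x,a]-[y,a]\,\bigr|\le |a|\,|x-y|\le |x-y|$, while the same inequality gives $[y,a]\ge 1-|a|\,|y|\ge 1-|y|$ (the cases $a=0$ being trivial). Combining the two yields
\[
\frac{[x,a]}{[y,a]}\le 1+\frac{|x-y|}{[y,a]}\le 1+\frac{|x-y|}{1-|y|},
\]
so the original inequality, which involves $a$, is now reduced to the purely two-point inequality $1+\dfrac{|x-y|}{1-|y|}\le\dfrac{1+\rho}{1-\rho}$.

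To finish I would clear the positive denominators in this two-point inequality and cancel the common factor $|x-y|$, after which I expect it to reduce to $[x,y]-|x-y|\le 2(1-|y|)$. Here I would invoke the one-line identity $[x,y]^2-|x-y|^2=(1-|x|^2)(1-|y|^2)$ to write $[x,y]-|x-y|=(1-|x|^2)(1-|y|^2)/\bigl([x,y]+|x-y|\bigr)$; the desired bound then becomes $(1-|x|^2)(1+|y|)\le 2\bigl([x,y]+|x-y|\bigr)$, which follows from $1+|y|\le 2$ together with the key estimate $1-|x|^2\le[x,y]+|x-y|$.

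The main obstacle is precisely this last estimate, and the point worth stressing is that crude bounds lose too much near the boundary: one cannot replace $[x,y]+|x-y|$ by a single rough lower bound, but must keep the full denominator. To establish $1-|x|^2\le[x,y]+|x-y|$ I would use $[x,y]\ge 1-|x|\,|y|$ (which comes from $[x,y]^2-(1-|x||y|)^2=2(|x||y|-x\cdot y)\ge 0$ by Cauchy--Schwarz) and split into the cases $|x|\ge|y|$ and $|x|<|y|$, bounding $|x-y|$ from below by $|x|-|y|$ and by $|y|-|x|$ respectively. In the first case the difference $\bigl([x,y]+|x-y|\bigr)-(1-|x|^2)$ is then at least $(1+|x|)(|x|-|y|)\ge 0$, and in the second case at least $(1-|x|)(|y|-|x|)\ge 0$; in either case it factors as a manifestly nonnegative product, which closes the argument.
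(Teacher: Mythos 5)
The paper offers no proof of this lemma at all: it simply cites \cite{CKL}, Lemma 2.2. Your argument is therefore judged on its own, and it is correct and complete. I checked each step: the reduction of the lower bound to the upper bound by swapping $x$ and $y$ is legitimate since $\rho$ is symmetric; the identity $[x,a]=\bigl|\,|a|x-a/|a|\,\bigr|$ is verified by direct expansion, and the two triangle-inequality consequences $\bigl|[x,a]-[y,a]\bigr|\le|a|\,|x-y|\le|x-y|$ and $[y,a]\ge 1-|a|\,|y|\ge 1-|y|$ follow at once; the algebra reducing $1+|x-y|/(1-|y|)\le(1+\rho)/(1-\rho)$ to $[x,y]-|x-y|\le 2(1-|y|)$ is exactly right (the denominators are positive because $[x,y]^2-|x-y|^2=(1-|x|^2)(1-|y|^2)>0$, i.e.\ $\rho<1$, which your cited identity supplies); and the final key estimate $1-|x|^2\le[x,y]+|x-y|$ is correctly established from $[x,y]\ge 1-|x|\,|y|$ (Cauchy--Schwarz) together with the two-case factorizations $(1+|x|)(|x|-|y|)$ and $(1-|x|)(|y|-|x|)$, both of which I verified. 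What your proof buys, compared with the paper, is a fully self-contained, purely Euclidean derivation (distances, triangle inequality, and one algebraic identity, with no M\"obius-transformation machinery), in place of an external citation; it is in the same elementary spirit as the source the paper points to, and it could be inserted verbatim where the paper currently defers to \cite{CKL}.
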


The following two lemmas show that if $x,y\in\mathbb{B}$ are close in the pseudohyperbolic metric, then certain quantities are comparable. Both of them easily follow from Lemma \ref{Inequality} (note that $[x,x]=1-|x|^2$).

\begin{lemma}\label{x-y-close}
Let $0<\delta<1$. Then
\[
[x,y]\sim 1-|x|^2 \sim 1-|y|^2,
\]
for all $x,y\in \mathbb{B}$ with $\rho(x,y)<\delta$.
\end{lemma}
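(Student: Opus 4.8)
The plan is to read off both comparabilities directly from Lemma \ref{Inequality} by making two judicious choices of the free point $a$, exploiting the symmetry $[x,y]=[y,x]$ and the identity $[x,x]=1-|x|^2$ recorded in the statement.

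First I would specialize Lemma \ref{Inequality} to $a=x$. Since $[x,x]=1-|x|^2$ and $[y,x]=[x,y]$, the inequality becomes
\[
\frac{1-\rho(x,y)}{1+\rho(x,y)} \leq \frac{1-|x|^2}{[x,y]} \leq \frac{1+\rho(x,y)}{1-\rho(x,y)}.
\]
The outer bounds are monotone in $\rho$: the left-hand side decreases and the right-hand side increases as $\rho$ grows. Hence, using $\rho(x,y)<\delta$, I replace them by the $\delta$-dependent constants $\frac{1-\delta}{1+\delta}$ and $\frac{1+\delta}{1-\delta}$, obtaining $1-|x|^2 \sim [x,y]$ with implied constants depending only on $\delta$.

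Next I would repeat the argument with $a=y$. Now the numerator is $[x,a]=[x,y]$ while the denominator is $[y,a]=[y,y]=1-|y|^2$, so Lemma \ref{Inequality} gives
\[
\frac{1-\rho(x,y)}{1+\rho(x,y)} \leq \frac{[x,y]}{1-|y|^2} \leq \frac{1+\rho(x,y)}{1-\rho(x,y)},
\]
and the same $\delta$-bound yields $[x,y]\sim 1-|y|^2$. Combining the two comparabilities via transitivity of $\sim$ establishes $[x,y]\sim 1-|x|^2 \sim 1-|y|^2$ and completes the proof.

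As the statement itself anticipates, there is no genuine obstacle here; the only points requiring care are the bookkeeping of which entry of the bracket is which after setting $a=x$ or $a=y$ (so that the symmetry of $[\cdot,\cdot]$ is used correctly), and the observation that confining $\rho$ below $\delta<1$ keeps the quantities $\frac{1\mp\rho}{1\pm\rho}$ uniformly bounded away from $0$ and $\infty$, so that the constants hidden in $\sim$ are genuinely independent of $x,y$.
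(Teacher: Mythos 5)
Your proposal is correct and is exactly the argument the paper intends: the paper gives no separate proof, remarking only that the lemma ``easily follows from Lemma \ref{Inequality} (note that $[x,x]=1-|x|^2$),'' and your two specializations $a=x$ and $a=y$, together with the uniform bounds on $\frac{1\mp\rho}{1\pm\rho}$ for $\rho<\delta<1$, are precisely that deduction spelled out. No gaps; the symmetry of the bracket and the monotonicity observations are handled correctly.
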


\begin{lemma}\label{Bracket-Hyper}
Let $0<\delta<1$. Then
\[
[x,a] \sim [y,a],
\]
for all $a,x,y\in \mathbb{B}$ with $\rho(x,y)<\delta$.
\end{lemma}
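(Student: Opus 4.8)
The plan is to read the two-sided estimate of Lemma \ref{Inequality} as a direct comparison of $[x,a]$ and $[y,a]$ and to absorb the dependence on $\rho(x,y)$ into a constant that depends only on the fixed parameter $\delta$. First I would invoke Lemma \ref{Inequality} with the given triple $a,x,y\in\mathbb{B}$, which yields immediately
\[
\frac{1-\rho(x,y)}{1+\rho(x,y)} \leq \frac{[x,a]}{[y,a]} \leq \frac{1+\rho(x,y)}{1-\rho(x,y)}.
\]

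Next I would exploit the monotonicity of the bounding functions on $[0,1)$: the map $t\mapsto (1-t)/(1+t)$ is decreasing and the map $t\mapsto (1+t)/(1-t)$ is increasing. Since by hypothesis $0\le\rho(x,y)<\delta<1$, replacing $\rho(x,y)$ by $\delta$ in the outer expressions only widens the interval, giving
\[
\frac{1-\delta}{1+\delta} \leq \frac{[x,a]}{[y,a]} \leq \frac{1+\delta}{1-\delta}.
\]
Both extreme quantities are strictly positive and finite and depend only on $\delta$, so this reads $[x,a]\le \frac{1+\delta}{1-\delta}\,[y,a]$ together with the reciprocal bound $[y,a]\le \frac{1+\delta}{1-\delta}\,[x,a]$, which is exactly $[x,a]\sim[y,a]$ with implied constants independent of the points $a,x,y$.

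There is essentially no obstacle here; the only point worth keeping in mind is that the comparison constant must be uniform over all admissible triples, and this is precisely what the substitution of $\delta$ for $\rho(x,y)$ secures, since the resulting bound no longer sees the individual points. I also note that the symmetry of $\rho$ need not be invoked separately, because the two-sided form of Lemma \ref{Inequality} already controls both the ratio $[x,a]/[y,a]$ and its reciprocal simultaneously.
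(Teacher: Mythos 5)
Your proof is correct and follows exactly the route the paper intends: the paper states that Lemma~\ref{Bracket-Hyper} ``easily follows from Lemma~\ref{Inequality},'' and your argument—bounding the ratio $[x,a]/[y,a]$ via Lemma~\ref{Inequality} and then using monotonicity of $t\mapsto(1\pm t)/(1\mp t)$ to replace $\rho(x,y)$ by $\delta$, yielding constants depending only on $\delta$—is precisely the routine verification being left to the reader.
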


For $0<\delta<1$ and $x\in\mathbb{B}$ we denote the pseudohyperbolic ball with center $x$ and radius $\delta$ by $E_\delta(x)$. The pseudohyperbolic ball $E_\delta(x)$ is also a Euclidean ball with center $c$ and radius $r$, where
\[
c=\frac{(1-\delta^2)x}{1-\delta^2|x|^2} \qquad \text{and} \qquad r=\frac{(1-|x|^2)\delta}{1-\delta^2|x|^2}.
\]
It follows that for fixed $0<\delta<1$, we have $\nu(E_\delta(x))\sim (1-|x|^2)^n$. More generally, for $\alpha\in \mathbb{R}$, by Lemma \ref{x-y-close}
\begin{equation}\label{hyper-volume}
\nu_\alpha(E_\delta(x))=\frac{1}{V_\alpha}  \int_{E_\delta(x)} (1-|y|^2)^\alpha \, d\nu(y) \sim (1-|x|^2)^\alpha \nu(E_\delta(x)) \sim (1-|x|^2)^{\alpha+n}.
\end{equation}

Let $\{a_k\}$ be a sequence of points in $\mathbb{B}$ and $0<\delta<1$. We say that $\{a_k\}$ is $\delta$-separated if $\rho(a_j,a_k)\geq\delta$ for all $j\neq k$. For a proof of the following lemma see, for example, \cite{L2}.
\begin{lemma}\label{ak}
  Let $0<\delta<1$. There exists a sequence of points $\{a_k\}$ in $\mathbb{B}$ satisfying the following properties:
  \begin{enumerate}
    \item[(i)] $\{a_k\}$ is $\delta$-separated.
    \item[(ii)] $\displaystyle \bigcup_{k=1}^\infty E_\delta(a_k) = \mathbb{B}$.
    \item[(iii)] There exists a positive integer $N$ such that every $x\in \mathbb{B}$ belongs to at most $N$ of the balls $E_\delta(a_k)$.
  \end{enumerate}
\end{lemma}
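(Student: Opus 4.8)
The plan is to obtain the three properties in two stages: a maximal-separated-set construction yields (i) and (ii) simultaneously, and a volume-comparison argument gives the finite overlap in (iii).

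First I would fix $0<\delta<1$ and consider the collection of all $\delta$-separated subsets of $\mathbb{B}$, partially ordered by inclusion. Every chain has its union as an upper bound, since a union of $\delta$-separated sets that is totally ordered by inclusion is again $\delta$-separated, so by Zorn's lemma there is a maximal $\delta$-separated set $\{a_k\}$, which is (i). Maximality forces (ii): if some $x\in\mathbb{B}$ lay outside every $E_\delta(a_k)$, then $\rho(x,a_k)\geq\delta$ for all $k$, and $\{a_k\}\cup\{x\}$ would still be $\delta$-separated, contradicting maximality. Hence each $x$ satisfies $\rho(x,a_k)<\delta$ for some $k$, i.e. $x\in E_\delta(a_k)$, so $\bigcup_k E_\delta(a_k)=\mathbb{B}$. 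That the index set is at most countable, so that $\{a_k\}$ is genuinely a sequence, will follow from the disjointness established next together with $\nu(\mathbb{B})=1$.

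The core of the argument is (iii). I would first record that $\rho$ is a metric, so the balls $E_{\delta/2}(a_k)$ are pairwise disjoint: if $z\in E_{\delta/2}(a_j)\cap E_{\delta/2}(a_k)$ then $\rho(a_j,a_k)\le \rho(a_j,z)+\rho(z,a_k)<\delta$, contradicting $\delta$-separation. Now fix $x\in\mathbb{B}$ and set $I=\{k:x\in E_\delta(a_k)\}$; by symmetry of $\rho$ this is $\{k:a_k\in E_\delta(x)\}$. Using the M\"obius (strong) triangle inequality $\rho(x,z)\le (\rho(x,a_k)+\rho(a_k,z))/(1+\rho(x,a_k)\rho(a_k,z))$, for $k\in I$ and $z\in E_{\delta/2}(a_k)$ one gets $\rho(x,z)<\sigma$, where $\sigma=(3\delta/2)/(1+\delta^2/2)<1$ depends only on $\delta$; hence $\bigcup_{k\in I}E_{\delta/2}(a_k)\subseteq E_\sigma(x)$. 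Comparing volumes, disjointness and (\ref{hyper-volume}) with $\alpha=0$ give $\sum_{k\in I}\nu(E_{\delta/2}(a_k))\le\nu(E_\sigma(x))\sim(1-|x|^2)^n$, while for each $k\in I$ we have $a_k\in E_\delta(x)$, so Lemma \ref{x-y-close} yields $1-|a_k|^2\sim 1-|x|^2$ and therefore $\nu(E_{\delta/2}(a_k))\sim(1-|a_k|^2)^n\sim(1-|x|^2)^n$, all with constants depending only on $\delta$. Dividing bounds the cardinality of $I$ by a constant $N=N(\delta)$ independent of $x$, which is exactly (iii).

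The main obstacle is this finite-overlap bound: everything hinges on producing a single pseudohyperbolic ball $E_\sigma(x)$ with $\sigma<1$ containing all the disjoint half-radius balls, and the ordinary triangle inequality is not enough here (it would only give radius $3\delta/2$, possibly $\ge 1$, whence $E_\sigma(x)$ could be all of $\mathbb{B}$ and the volume estimate would collapse), so one genuinely needs the M\"obius-invariant form. Once the containment and the comparability $1-|a_k|^2\sim 1-|x|^2$ are in place, (\ref{hyper-volume}) turns the count into a one-line volume estimate.
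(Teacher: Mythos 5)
Your proof is correct, but note that the paper itself does not prove this lemma at all: it simply cites Luecking \cite{L2}, so there is no internal argument to compare against. Your two-stage scheme (Zorn's lemma for a maximal $\delta$-separated set, giving (i) and (ii); disjointness of the half-radius balls plus volume comparison for (iii)) is exactly the standard covering-lemma proof found in that literature, and all the estimates you use are available in the paper: (\ref{hyper-volume}) with $\alpha=0$ and Lemma \ref{x-y-close} do deliver $\nu(E_{\delta/2}(a_k))\sim(1-|a_k|^2)^n\sim(1-|x|^2)^n$ uniformly for $a_k\in E_\delta(x)$. The one step you invoke without justification is the strong (M\"obius) triangle inequality $\rho(x,z)\le(\rho(x,y)+\rho(y,z))/(1+\rho(x,y)\rho(y,z))$ on the real ball; this is true (it follows from $\rho=\tanh(d_H/2)$, $d_H$ the Poincar\'e hyperbolic metric on $\mathbb{B}\subset\mathbb{R}^n$, together with the addition law for $\tanh$ and the ordinary triangle inequality for $d_H$), but it is not stated in the paper and deserves a proof or a reference. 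Your parenthetical claim that the M\"obius form is genuinely needed is, however, overstated: one can bypass it entirely using the Euclidean description of pseudohyperbolic balls given in the paper. Indeed, for $k$ with $x\in E_\delta(a_k)$, both the Euclidean radius of $E_{\delta/2}(a_k)$ and the Euclidean distance $|x-a_k|$ are $\lesssim_\delta(1-|x|^2)$, so all these disjoint balls lie in a single Euclidean ball centered at $x$ of radius $C(\delta)(1-|x|^2)$ and Lebesgue volume $\lesssim(1-|x|^2)^n$; the same counting argument then closes the proof without any refined triangle inequality. Either way, your argument is complete up to that one citation-level fact.
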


In what follows whenever we use expressions like $\widehat{\mu}_{\alpha,\delta}(a_{k})$, the sequence $\{a_k\}=\{a_k(\delta)\}$ will always refer to the sequence chosen in Lemma \ref{ak}.

We also need to the following pointwise estimate. See
\cite[Lemma 3.1]{DOG} for a proof.
\begin{lemma}\label{growth}
Let $0<p<\infty$ and $\alpha>-1$. Then
\[
|u(x)| \lesssim \frac{\|u\|_{b^p_\alpha}}{(1-|x|^2)^{(n+\alpha)/p}}
\]
for all $u\in b^p_\alpha$ and $x\in \mathbb{B}$.
\end{lemma}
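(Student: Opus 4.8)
The plan is to reduce the pointwise bound to a local sub-mean-value inequality for $|u|^p$ and then exploit the comparability of the weight $(1-|y|^2)^\alpha$ on a ball whose radius is proportional to $1-|x|^2$. Fix $x\in\mathbb{B}$ and set $r=\tfrac12(1-|x|)$, so that the Euclidean ball $B(x,r)$ is contained in $\mathbb{B}$ and every $y\in B(x,r)$ satisfies $1-|y|^2\sim 1-|x|^2$ (equivalently, $B(x,r)$ sits inside a pseudohyperbolic ball of fixed radius, so Lemma \ref{x-y-close} applies); moreover $r^n\sim(1-|x|^2)^n$. The first thing I would establish is the sub-mean-value estimate
\[
|u(x)|^p \lesssim \frac{1}{r^n}\int_{B(x,r)} |u(y)|^p\,d\nu(y),
\]
valid for every $0<p<\infty$ with an implied constant depending only on $n$ and $p$.

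The key step, and the main obstacle, is proving this sub-mean-value inequality for the full range $0<p<\infty$, since $|u|^p$ is genuinely subharmonic only when $p\ge 1$. For $p\ge 1$ the composition of the subharmonic function $|u|$ with the convex increasing function $t\mapsto t^p$ is subharmonic, so the ordinary mean-value inequality for subharmonic functions over the ball $B(x,r)$ centered at $x$ gives the estimate at once. For $0<p<1$ I would run the standard bootstrapping argument: let $M=\sup_{\overline{B(x,r/2)}}|u|$ and pick $y_0\in\overline{B(x,r/2)}$ with $|u(y_0)|=M$. Since $B(y_0,r/2)\subset B(x,r)$ and $|u|$ is subharmonic, the mean-value inequality on $B(y_0,r/2)$ together with the crude bound $|u(y)|=|u(y)|^p\,|u(y)|^{1-p}\le |u(y)|^p M^{1-p}$ yields
\[
M \lesssim \frac{1}{r^n}\int_{B(y_0,r/2)} |u(y)|^p M^{1-p}\,d\nu(y),
\]
so that after dividing by $M^{1-p}$ one gets $M^p\lesssim r^{-n}\int_{B(x,r)}|u|^p\,d\nu$; combined with $|u(x)|\le M$ this is exactly the desired inequality.

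Finally I would insert the weight. On $B(x,r)$ we have $(1-|y|^2)^\alpha\sim(1-|x|^2)^\alpha$, hence
\[
\int_{B(x,r)}|u(y)|^p\,d\nu(y) \sim (1-|x|^2)^{-\alpha}\int_{B(x,r)}|u(y)|^p(1-|y|^2)^\alpha\,d\nu(y) \le V_\alpha\,(1-|x|^2)^{-\alpha}\,\|u\|_{b^p_\alpha}^p,
\]
the last inequality because the integral over $B(x,r)$ is dominated by the integral over all of $\mathbb{B}$. Putting this together with $r^n\sim(1-|x|^2)^n$ and the sub-mean-value estimate gives
\[
|u(x)|^p \lesssim \frac{1}{(1-|x|^2)^{n+\alpha}}\,\|u\|_{b^p_\alpha}^p,
\]
and taking $p$-th roots yields the assertion. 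The only place where the hypothesis $\alpha>-1$ is used is in guaranteeing that $\nu_\alpha$ is finite, so that $\|u\|_{b^p_\alpha}$ is a genuine norm controlling the global integral; the local comparability of the weight and the sub-mean-value estimate hold for every real $\alpha$.
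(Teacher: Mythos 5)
Your overall strategy---reduce the pointwise bound to a sub-mean-value inequality for $|u|^p$ on a Euclidean ball of radius comparable to $1-|x|$, then use comparability of the weight $(1-|y|^2)^\alpha$ on that ball---is the standard one; the paper itself offers no argument for this lemma (it only cites \cite[Lemma 3.1]{DOG}), and the cited proof follows this same outline. Your reduction in the last step is correct, as is the case $p\ge 1$.

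The gap is in your bootstrapping for $0<p<1$, and it is a genuine one. You set $M=\sup_{\overline{B(x,r/2)}}|u|$, attained at $y_0$, and then on $B(y_0,r/2)$ you use $|u(y)|\le |u(y)|^p M^{1-p}$. This requires $|u(y)|\le M$ for all $y\in B(y_0,r/2)$, but $B(y_0,r/2)$ is not contained in $\overline{B(x,r/2)}$: it only sits inside $B(x,r)$, and on the annulus $B(x,r)\setminus \overline{B(x,r/2)}$ the function $|u|$ may well exceed $M$. Worse, this is the generic situation: $|u|$ is subharmonic, so by the maximum principle $y_0$ lies \emph{on} the sphere $|y_0-x|=r/2$ whenever $u$ is nonconstant, and then half of $B(y_0,r/2)$ protrudes outside $\overline{B(x,r/2)}$. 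So dividing by $M^{1-p}$ is unjustified and the one-step argument collapses; this is precisely the difficulty that makes the $0<p<1$ case (the Hardy--Littlewood/Fefferman--Stein lemma) nontrivial. Two standard repairs: (a) maximize instead the distance-weighted function $h(y)=(r-|y-x|)^{n/p}|u(y)|$ over $\overline{B(x,r)}$; if $y_0$ is its maximum point and $d_0=r-|y_0-x|$, then every $y\in B(y_0,d_0/2)$ satisfies $r-|y-x|\ge d_0/2$, hence $|u(y)|\le 2^{n/p}|u(y_0)|$ by maximality of $h$, and now your one-step computation closes up, giving $h(y_0)^p\lesssim \int_{B(x,r)}|u|^p\,d\nu$ and therefore $|u(x)|^p=r^{-n}h(x)^p\le r^{-n}h(y_0)^p$; or (b) iterate dyadically: with $M_k=\sup_{\overline{B(x,r(1-2^{-k}))}}|u|$ one gets $M_k\lesssim 2^{kn}M_{k+1}^{1-p}\,r^{-n}\int_{B(x,r)}|u|^p\,d\nu$, and since the exponent $(1-p)^k$ decays geometrically while $M_k\le \sup_{\overline{B(x,r)}}|u|<\infty$, the supremum can be absorbed in the limit, again yielding $M_0^p=|u(x)|^p\lesssim r^{-n}\int_{B(x,r)}|u|^p\,d\nu$. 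With either device inserted, the rest of your proof is correct.
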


\subsection{Reproducing Kernels and the Operators $D^t_s$}
For every $\alpha\in \mathbb{R}$ we have $\gamma_{0} (\alpha)=1$, and therefore
\begin{equation*}
R_\alpha(x,0)=R_\alpha(0,y)=1, \quad (x,y\in \mathbb{B}, \alpha\in \mathbb{R}).
\end{equation*}
Checking the two cases in (\ref{gamma k q-Definition}), we have by (\ref{Stirling})
\begin{equation}\label{gamma-k-asymptotic}
\gamma_k(\alpha) \sim k^{1+\alpha} \quad (k\to \infty).
\end{equation}
$R_\alpha(x,y)$ is harmonic as a function of either of its variables on $\overline{\mathbb{B}}$.
Using the coefficients in the extended kernels we define the radial differential operators $D^t_s$.

\begin{definition}
Let $f=\sum_{k=0}^\infty f_k\in h(\mathbb{B})$ be given by its homogeneous expansion. For $s,t\in\mathbb{R}$ we define  $D_s^t$ on $ h(\mathbb{B}) $  by

\begin{equation*}
  D_s^t f := \sum_{k=0}^\infty \frac{\gamma_k(s+t)}{\gamma_k(s)} \, f_k.
\end{equation*}

\end{definition}
By (\ref{gamma-k-asymptotic}), $\gamma_k(s+t)/\gamma_k(s) \sim k^t$ for any $s,t$ and, roughly speaking, $D_s^t$ multiplies the $k$th homogeneous part of $f$ by $k^{t}$. For every $s\in \mathbb{R}$, $D_s^0=I$, the identity. An important property of $D^t_s$ is that it is invertible with two-sided inverse $D_{s+t}^{-t}$:

\begin{equation}\label{*}
D^{-t}_{s+t} D^t_s = D^t_s D^{-t}_{s+t} = I,
\end{equation}
which follows from the additive property $D_{s+t}^{z} D_s^t = D_s^{z+t}$.

For every $s,t \in \mathbb{R}$, the map $D^t_s: h(\mathbb{B})\to h(\mathbb{B})$ is continuous in the topology
of uniform convergence on compact subsets (see \cite[Theorem 3.2]{GKU2}). The parameter $s$ plays a minor role and is used to have the precise relation

\begin{equation}\label{**}
D_s^t R_s(x,y)=R_{s+t}(x,y)
\end{equation}

One of the most important properties about the operator $D^t_s$ is that it allows us to pass from one Bloch space to another. More precisely, we have the following isomorphism. For a proof see \cite[Proposition 4.6]{DU1}.

\begin{lemma}\label{Apply-Dst}
Let  $\alpha,s,t\in \mathbb{R}$.
The map $D^t_s:b^\infty_\alpha \to b^\infty_{\alpha+t}$ is an isomorphism.
\end{lemma}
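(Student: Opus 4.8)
The plan is to exploit the additive property $D^{z}_{s+t}D^t_s = D^{z+t}_s$ together with the independence of the Bloch norm from the parameters used to define it, thereby reducing the entire statement to a single bookkeeping identity between two suprema. First I would fix an auxiliary real number $t'$ chosen large enough that $\alpha+t+t'>0$; since $\alpha$ and $t$ are fixed, such a choice always exists. This one inequality is exactly what legitimizes both norm computations below: read as $\alpha+(t+t')>0$ it permits the pair $(s,\,t+t')$ to compute the $b^\infty_\alpha$-norm, and read as $(\alpha+t)+t'>0$ it permits the pair $(s+t,\,t')$ to compute the $b^\infty_{\alpha+t}$-norm.

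With these choices fixed, for $f\in h(\mathbb{B})$ I would write out the target norm as
\[
\|D^t_s f\|_{b^\infty_{\alpha+t}} = \sup_{x\in\mathbb{B}} (1-|x|^2)^{(\alpha+t)+t'}\,\bigl|D^{t'}_{s+t}(D^t_s f)(x)\bigr|,
\]
and then apply the additive property with $z=t'$ to collapse $D^{t'}_{s+t}D^t_s = D^{t+t'}_s$. Because $(\alpha+t)+t' = \alpha+(t+t')$, the right-hand side becomes precisely
\[
\sup_{x\in\mathbb{B}} (1-|x|^2)^{\alpha+(t+t')}\,\bigl|D^{t+t'}_s f(x)\bigr| = \|f\|_{b^\infty_\alpha},
\]
the last equality being the definition of the $b^\infty_\alpha$-norm computed with the pair $(s,\,t+t')$. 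Thus for this particular parameter choice the two norms are literally equal, so $D^t_s f\in b^\infty_{\alpha+t}$ if and only if $f\in b^\infty_\alpha$; invoking the equivalence of norms under (\ref{alpha+pt}) then upgrades this to $\|D^t_s f\|_{b^\infty_{\alpha+t}}\sim\|f\|_{b^\infty_\alpha}$ for the canonical norms. In particular $D^t_s$ is a bounded and bounded-below linear map from $b^\infty_\alpha$ into $b^\infty_{\alpha+t}$.

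It then remains to establish surjectivity together with boundedness of the inverse. For this I would use that $D^t_s$ is invertible on all of $h(\mathbb{B})$ with two-sided inverse $D^{-t}_{s+t}$ by (\ref{*}). Given $g\in b^\infty_{\alpha+t}$, set $f=D^{-t}_{s+t}g\in h(\mathbb{B})$; the norm identity just established, applied to this $f$, yields $\|g\|_{b^\infty_{\alpha+t}}=\|D^t_s f\|_{b^\infty_{\alpha+t}}\sim\|f\|_{b^\infty_\alpha}$, so that $f\in b^\infty_\alpha$ and $D^t_s f=g$. Hence $D^t_s$ is onto, its inverse is $D^{-t}_{s+t}$, and that inverse is bounded, which is the assertion that $D^t_s$ is an isomorphism.

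I would expect no genuine analytic obstacle in this argument; the single point requiring care is the compatibility of parameters. One must check that one auxiliary shift $t'$ can simultaneously meet the admissibility constraint (\ref{alpha+pt}) for the \emph{source} space, where the effective shift is $t+t'$, and for the \emph{target} space, where it is $t'$. The reason this works is that both constraints reduce to the single inequality $\alpha+t+t'>0$, so one choice of $t'$ serves both purposes at once, and it is exactly this coincidence that allows the additive property to match the two suprema term by term.
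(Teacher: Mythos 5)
Your proof is correct. The paper gives no argument of its own for this lemma, deferring to \cite[Proposition 4.6]{DU1}; your parameter-matching argument --- computing the $b^\infty_{\alpha+t}$-norm with the pair $(s+t,t')$, collapsing $D^{t'}_{s+t}D^t_s=D^{t+t'}_s$ via the additive property so that it coincides with the $b^\infty_\alpha$-norm computed with $(s,t+t')$, and then using the two-sided inverse $D^{-t}_{s+t}$ from (\ref{*}) for surjectivity --- is exactly the standard proof, and every ingredient you invoke (the additive property, the independence of the norm from admissible parameters, and (\ref{*})) is available in the paper's preliminaries.
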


 The following duality result is \cite[Theorem 5.4]{DU1}.

\begin{theorem}\label{Theorem-Dual-of-b1q}
Let $q \in \mathbb{R}$. Pick $s',t'$ such that
\begin{align*}
s' &> q, \\
q+t' &> -1.
\end{align*}
The dual of $b^1_{q}$ can be identified with $b_\alpha$ (for any $\alpha \in \mathbb{R}$) under the pairing
\begin{equation*}
\langle f, g \rangle = \int_{\mathbb{B}} I^{t'}_{s'} f \, \overline{I^{s'-q-\alpha}_{t'+q+\alpha}g} \ d\nu_{q+\alpha}, \qquad (f\in b^1_{q}, \ g\in b_\alpha).
\end{equation*}
\end{theorem}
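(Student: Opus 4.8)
The plan is to prove the two inclusions separately, using that under the stated hypotheses the two operators entering the pairing isometrically realize the two spaces inside Lebesgue classes. Observe first that $s'>q$ is precisely the admissibility condition $\alpha+t>0$ of Definition \ref{definition of the h B-B space} applied to $I^{s'-q-\alpha}_{t'+q+\alpha}$ (whose upper parameter is $s'-q-\alpha$, so that $\alpha+(s'-q-\alpha)=s'-q>0$), which makes $I^{s'-q-\alpha}_{t'+q+\alpha}$ embed $b_\alpha=b^\infty_\alpha$ isometrically into $L^\infty_\alpha$; similarly $q+t'>-1$ is the condition under which $I^{t'}_{s'}$ embeds $b^1_{q}$ isometrically into $L^1_{q}$. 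Thus both factors of the integrand are controlled, and the pairing is well defined.

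For the easy inclusion I would fix $g\in b_\alpha$ and estimate directly. Writing out the $I$-operators and combining the weights, the integrand carries a factor $(1-|x|^2)^{s'+t'}$, which I split as $(1-|x|^2)^{q+t'}(1-|x|^2)^{s'-q}$. Pulling the supremum $\sup_x(1-|x|^2)^{s'-q}\,|D^{s'-q-\alpha}_{t'+q+\alpha}g(x)|=\|g\|_{b^\infty_\alpha}$ out of the integral leaves exactly $\int_{\mathbb{B}}(1-|x|^2)^{q+t'}|D^{t'}_{s'}f(x)|\,d\nu(x)\sim\|f\|_{b^1_{q}}$, so that $|\langle f,g\rangle|\lesssim\|f\|_{b^1_{q}}\,\|g\|_{b_\alpha}$. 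Hence every $g\in b_\alpha$ induces a bounded functional on $b^1_{q}$ with norm $\lesssim\|g\|_{b_\alpha}$.

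For the hard inclusion, start with $\Lambda\in(b^1_{q})^{*}$. Since $I^{t'}_{s'}$ embeds $b^1_{q}$ isometrically onto a closed subspace of $L^1_{q}$, transport $\Lambda$ to that subspace, extend it to all of $L^1_{q}$ by Hahn--Banach without increasing the norm, and represent the extension through the duality $(L^1_{q})^{*}=L^\infty$ by some $\psi$ with $\|\psi\|_{L^\infty}=\|\Lambda\|$, giving $\Lambda(f)=\int_{\mathbb{B}}I^{t'}_{s'}f\,\overline{\psi}\,d\nu_{q}$ for all $f\in b^1_{q}$. Now define $g$ as a harmonic Bergman--Besov projection of $\psi$ with the projection parameter tuned so that its image lands in $b_\alpha$; the boundedness of such a projection from $L^\infty$ onto $b^\infty_\alpha$, valid under a condition of the form (\ref{two}) and reachable from $s'>q$ after an application of the isomorphism $D^t_s:b^\infty_\alpha\to b^\infty_{\alpha+t}$ of Lemma \ref{Apply-Dst}, yields $\|g\|_{b_\alpha}\lesssim\|\psi\|_{L^\infty}=\|\Lambda\|$.

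It then remains to verify $\Lambda(f)=\langle f,g\rangle$. Here I would insert into $\int_{\mathbb{B}}I^{t'}_{s'}f\,\overline{\psi}\,d\nu_{q}$ the reproducing formula for $D^{t'}_{s'}f\in b^1_{q+t'}$ (legitimate because $q+t'>-1$), apply Fubini, and use the kernel identity (\ref{**}), $D^t_sR_s=R_{s+t}$, together with the self-adjointness of the operators $D^t_s$ under the weighted $\nu$-pairings, to collapse the resulting double integral into exactly the defining pairing with $g$. The main obstacle is this last bookkeeping step: one must choose the projection defining $g$ so that simultaneously $g\in b_\alpha$ \emph{and} the kernel/derivative identities realign the integrand into $\overline{I^{s'-q-\alpha}_{t'+q+\alpha}g}$, and one must justify the interchange of integration. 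I would handle the interchange by first working on the dense subspace of harmonic polynomials in $b^1_{q}$, where all integrals converge absolutely and Fubini applies, and then pass to the general $f$ by the pointwise growth estimate of Lemma \ref{growth} together with the already-established boundedness of the pairing.
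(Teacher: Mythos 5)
First, a point of order: the paper itself does not prove Theorem \ref{Theorem-Dual-of-b1q}; it is imported verbatim from \cite[Theorem 5.4]{DU1}, so there is no in-paper proof to compare yours against. Judged on its own terms, your proposal follows the standard architecture for such duality theorems, and it is essentially correct. The forward direction as you wrote it is complete: the split $(1-|x|^2)^{s'+t'}=(1-|x|^2)^{q+t'}(1-|x|^2)^{s'-q}$ is exactly right, because $\sup_x(1-|x|^2)^{s'-q}|D^{s'-q-\alpha}_{t'+q+\alpha}g(x)|$ is one of the equivalent $b^\infty_\alpha$-norms (parameters $t=s'-q-\alpha$, $s=t'+q+\alpha$, with $\alpha+t=s'-q>0$), while the remaining integral is the $b^1_q$-norm (legitimate since $q+t'>-1$).

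For the converse, the step you flag as ``the main obstacle'' does close, and it is worth recording how. After Hahn--Banach and $(L^1_q)^*=L^\infty$ give $\Lambda(f)=\int_{\mathbb{B}} I^{t'}_{s'}f\,\overline{\psi}\,d\nu_q$, insert the reproducing formula $D^{t'}_{s'}f(x)=\int_{\mathbb{B}}R_{s'+t'}(x,y)\,D^{t'}_{s'}f(y)\,d\nu_{s'+t'}(y)$ (valid because $D^{t'}_{s'}f\in b^1_{q+t'}$, $q+t'>-1$, and $s'+t'>q+t'$) and apply Fubini; this yields $\Lambda(f)=c\int_{\mathbb{B}} D^{t'}_{s'}f\,\overline{u}\,d\nu_{s'+t'}$ with $u=Q_{s'+t'}\bigl[(1-|\cdot|^2)^{q-s'}\psi\bigr]$. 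Since $(1-|\cdot|^2)^{q-s'}\psi\in L^\infty_{s'-q}$ and $s'+t'>(s'-q)-1$ is exactly $q+t'>-1$, i.e.\ condition (\ref{two}), we get $u\in b^\infty_{s'-q}$ with norm $\lesssim\|\psi\|_{L^\infty}=\|\Lambda\|$, and then $g:=c\,D^{q+\alpha-s'}_{s'+t'}u\in b_\alpha$ by Lemma \ref{Apply-Dst}; by (\ref{**}) this $g$ equals, up to a constant, the single projection $Q_{q+\alpha+t'}\bigl[(1-|\cdot|^2)^{-\alpha}\psi\bigr]$. Note the subtlety your sketch glosses over: the projection that works is applied to the \emph{twisted} function $(1-|\cdot|^2)^{-\alpha}\psi$ (or $(1-|\cdot|^2)^{q-s'}\psi$ in the two-step version), not to $\psi$ itself, and its target exponent is what makes the integrand realign into $\overline{I^{s'-q-\alpha}_{t'+q+\alpha}g}$. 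Two smaller remarks: (a) the polynomial-density detour for Fubini is unnecessary, since Lemma \ref{R-alpha} together with the standard estimate $\int_{\mathbb{B}}(1-|x|^2)^{q+t'}[x,y]^{-(n+s'+t')}\,d\nu(x)\sim(1-|y|^2)^{q-s'}$ shows the double integral converges absolutely for every $f\in b^1_q$; (b) a full ``identification'' also requires injectivity of $g\mapsto\langle\cdot,g\rangle$, which you omit, but which follows by pairing with $f=R_{s'}(x,\cdot)\in b^1_q$, since that pairing reproduces $D^{s'-q-\alpha}_{t'+q+\alpha}g(x)$.
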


\subsection{Estimates of Reproducing Kernels}

In case $\alpha>-1$, the reproducing kernels $R_\alpha(x,y)$ are well-studied by various authors. We recall some of their properties below. For extension of these properties to $\alpha\in\mathbb{R}$ we refer to \cite{GKU2}.

For a proof of the following pointwise estimate see \cite{CR,R} when $\alpha>-1$ and \cite{GKU2} when $\alpha\in\mathbb{R}$.

\begin{lemma}\label{R-alpha}
Let $\alpha\in\mathbb{R}$. For all $x,y\in\mathbb{B}$,
\[
|R_\alpha(x,y)| \lesssim
\begin{cases}
\dfrac{1}{[x,y]^{\alpha+n}},&\text{if $\, \alpha>-n$};\\
1+\log \dfrac{1}{[x,y]},&\text{if $\, \alpha=-n$};\\
1,&\text{if $\, \alpha<-n$}.
\end{cases}
\]
\end{lemma}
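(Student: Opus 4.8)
The plan is to pass from the zonal-harmonic series (\ref{Rq - Series expansion}) to a scalar series in two real variables, sum it in closed form, and read off the trichotomy from the resulting power of $[x,y]$.

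First I would reduce the estimate to a scalar problem. By the classical identification of zonal harmonics with ultraspherical (Gegenbauer) polynomials (see \cite[Chapter 5]{ABR}), for $n\geq 3$ one has $Z_k(x,y)=\frac{n+2k-2}{n-2}\,C_k^{n/2-1}(t)\,r^k$, where I write $r=|x|\,|y|\in[0,1)$ and $t=(x\cdot y)/(|x|\,|y|)\in[-1,1]$; for $n=2$ the same holds with Chebyshev polynomials in place of the degenerate Gegenbauer family. The point of these variables is the identity $[x,y]^2=1-2rt+r^2$, so that the master generating function
\[
(1-2rt+r^2)^{-\mu}=\sum_{k=0}^\infty C_k^\mu(t)\,r^k
\]
expresses every comparison kernel $[x,y]^{-2\mu}$ as a Gegenbauer series in exactly these variables. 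Thus $R_\alpha(x,y)$ and the target $[x,y]^{-(n+\alpha)}$ live in one and the same one-parameter family of series.

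Next I would match coefficients and sum. By (\ref{gamma-k-asymptotic}) we have $\gamma_k(\alpha)\sim k^{1+\alpha}$, and $Z_k(\zeta,\zeta)\sim k^{n-2}$, so the coefficients of the series for $R_\alpha$ are of order $k^{n+\alpha-1}$, exactly the order of the coefficients $C_k^{(n+\alpha)/2}(1)\sim k^{n+\alpha-1}$ of $[x,y]^{-(n+\alpha)}$. Carrying out the summation — absorbing the factor $n+2k-2$ through an $r$-derivative of the generating function and collecting the hypergeometric coefficients coming from $\gamma_k(\alpha)$ — yields a closed form in which the singular factor $[x,y]^{-(n+\alpha)}$ appears multiplied by a bounded hypergeometric correction; this is the closed form recorded in \cite{GKU2}.

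Finally, the three cases are read off from the sign of the exponent $n+\alpha$. When $\alpha>-n$ the bounded correction leaves the genuine blow-up $[x,y]^{-(n+\alpha)}$; when $\alpha=-n$ the series is borderline, with effective coefficients of order $k^{-1}$, summing to $1+\log\frac{1}{[x,y]}$; when $\alpha<-n$ the series converges absolutely and uniformly, so the kernel is bounded. I expect the main obstacle to be the sharpness of the $[x,y]$-dependence: the crude term-wise estimate $|C_k^\mu(t)|\le C_k^\mu(1)$ produces only the weaker bound $(1-|x|\,|y|)^{-(n+\alpha)}$ and discards the angular decay, so capturing the correct $[x,y]^{-(n+\alpha)}$ forces the genuine closed-form summation, equivalently uniform control of the Gegenbauer series for $t$ bounded away from $1$. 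A secondary point of care is verifying that $\gamma_k(\alpha)\sim k^{1+\alpha}$ holds uniformly across the threshold $\alpha=-(1+n/2)$, where the defining formula (\ref{gamma k q-Definition}) for $\gamma_k$ switches; this is precisely the content of (\ref{gamma-k-asymptotic}), obtained from (\ref{Stirling}).
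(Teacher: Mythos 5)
Note first that the paper does not prove this lemma at all: it quotes it, citing \cite{CR,R} for $\alpha>-1$ and \cite{GKU2} for the full range $\alpha\in\mathbb{R}$. So the relevant comparison is with those literature proofs, whose skeleton you have in fact reproduced correctly: the Gegenbauer reduction $Z_k(x,y)=\frac{n+2k-2}{n-2}C_k^{n/2-1}(t)r^k$ with $[x,y]^2=1-2rt+r^2$, the coefficient size $\gamma_k(\alpha)Z_k\sim k^{n+\alpha-1}$ uniformly across the branch switch in (\ref{gamma k q-Definition}), and the case $\alpha<-n$, which is genuinely complete as you state it, since $|Z_k(x,y)|\le Z_k(\zeta,\zeta)r^k\lesssim k^{n-2}r^k$ and $\sum_k k^{n+\alpha-1}<\infty$ give boundedness by absolute convergence.

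The gap is in the pivotal middle step. The sentence ``carrying out the summation \dots yields a closed form in which the singular factor $[x,y]^{-(n+\alpha)}$ appears multiplied by a bounded hypergeometric correction; this is the closed form recorded in \cite{GKU2}'' is asserted rather than performed, and as stated it is not available: $\gamma_k(\alpha)$ is only \emph{asymptotically}, not exactly, proportional to the coefficients generated by $(1-2rt+r^2)^{-(n+\alpha)/2}$ and its $r$-derivatives (in the branch $\alpha\le-(1+n/2)$ the $(k!)^2$ numerator has an entirely different hypergeometric shape), so no exact closed form with a uniformly bounded correction exists for general real $\alpha$, and \cite{GKU2} records no such formula. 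What the actual proofs do (\cite{CR}, \cite{M} for $\alpha>-1$, and \cite{GKU2} in the full range) is peel off a \emph{finite asymptotic expansion}: write $\gamma_k(\alpha)=\sum_{j=0}^{N}c_j\beta_k^{(j)}+\varepsilon_k$, where each sequence $\beta_k^{(j)}$ is chosen so that $\sum_k\beta_k^{(j)}Z_k(x,y)$ sums \emph{exactly} via the generating function (equivalently, derivatives of the extended Poisson kernel) to a multiple of $[x,y]^{-(n+\alpha)}$ or a less singular power, while $\varepsilon_k=O(k^{1+\alpha-N-1})$ with $N$ large enough that the remainder series converges absolutely and is bounded after the crude termwise estimate. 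Your proposal needs this peeling argument (or an equivalent uniform control of the Gegenbauer series for $t$ away from $1$) spelled out; it is the entire content of the proof, not a technicality. Moreover, without it your case $\alpha=-n$ fails as written: the termwise bound $|C_k^\mu(t)|\le C_k^\mu(1)$ gives only $\sum_k k^{-1}r^k\sim\log\frac{1}{1-r}$ with $r=|x|\,|y|$, and since $[x,y]\ge 1-r$ this is the \emph{weaker} bound $1+\log\frac{1}{1-|x|\,|y|}\ge 1+\log\frac{1}{[x,y]}$, not the one claimed in the lemma --- that is, the very cancellation you correctly identify as the ``main obstacle'' in the power case is silently assumed in the logarithmic case, which requires the same expansion-and-remainder treatment.
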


The next lemma shows that the first part of the above estimate continues to hold when $x$ and $y$ are close enough in the pseudohyperbolic metric. It can be proved along the same lines as \cite[Proposition 5]{M}.

\begin{lemma}\label{Kernel-two-sided}
Let $\alpha>-n$. There exists $0<\delta<1$ such that for every $x\in \mathbb{B}$ and $y\in E_\delta(x)$,
\[
R_\alpha (x,y) \sim \frac{1}{(1-|x|^2)^{\alpha+n}}.
\]
\end{lemma}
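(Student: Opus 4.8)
The plan is to establish the upper and lower estimates separately; the upper bound is immediate, while the lower bound requires comparing $R_\alpha(x,y)$ with its value on the diagonal. For the upper bound, Lemma \ref{R-alpha} gives $R_\alpha(x,y)\lesssim [x,y]^{-(\alpha+n)}$ for $\alpha>-n$, and Lemma \ref{x-y-close} gives $[x,y]\sim 1-|x|^2$ whenever $\rho(x,y)<\delta$, so $R_\alpha(x,y)\lesssim (1-|x|^2)^{-(\alpha+n)}$ for any fixed $\delta<1$.

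The first main step for the lower bound is the diagonal estimate $R_\alpha(x,x)\sim (1-|x|^2)^{-(\alpha+n)}$, which I would extract from the homogeneous expansion (\ref{Rq - Series expansion}). Checking both cases of (\ref{gamma k q-Definition}) shows $\gamma_k(\alpha)>0$ for every $k$, and the zonal harmonic on the diagonal satisfies $Z_k(x,x)=c_k|x|^{2k}$ with $c_k\sim k^{n-2}$ (see \cite[Chapter 5]{ABR}), so every term of the series is nonnegative. Combining with the asymptotic $\gamma_k(\alpha)\sim k^{1+\alpha}$ from (\ref{gamma-k-asymptotic}), the general term is comparable to $k^{\alpha+n-1}|x|^{2k}$ uniformly in $x$, and summing via the elementary relation $\sum_k k^{\beta} r^{k}\sim (1-r)^{-(\beta+1)}$ as $r\to 1^-$ (valid for $\beta=\alpha+n-1>-1$, i.e.\ exactly when $\alpha>-n$) with $r=|x|^2$ yields $R_\alpha(x,x)\sim (1-|x|^2)^{-(\alpha+n)}$.

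The second step is a uniform gradient estimate for the kernel. Since $y\mapsto R_\alpha(x,y)$ is harmonic on $\mathbb{B}$, I would apply the interior gradient estimate for harmonic functions on a Euclidean ball $B(\xi,R)$ with $R$ a small fixed fraction of $1-|\xi|^2$, namely $|\nabla_y R_\alpha(x,\xi)|\lesssim R^{-1}\sup_{B(\xi,R)}|R_\alpha(x,\cdot)|$. For $\xi\in E_\delta(x)$, and with $R$ chosen so that $B(\xi,R)$ lies inside a fixed pseudohyperbolic neighborhood $E_{\delta'}(x)$, Lemma \ref{R-alpha} together with Lemma \ref{x-y-close} bounds the supremum by $(1-|x|^2)^{-(\alpha+n)}$ and gives $R\sim 1-|x|^2$, hence $|\nabla_y R_\alpha(x,\xi)|\lesssim (1-|x|^2)^{-(\alpha+n+1)}$.

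Finally I would combine the pieces. The ball $E_\delta(x)$ is a Euclidean ball, hence convex, so for $y\in E_\delta(x)$ the segment from $x$ to $y$ stays inside it, and the mean value theorem produces a point $\xi$ on this segment with $R_\alpha(x,x)-R_\alpha(x,y)=\nabla_y R_\alpha(x,\xi)\cdot(x-y)$. Using $|x-y|=\rho(x,y)[x,y]\leq \delta[x,y]\lesssim \delta(1-|x|^2)$ together with the gradient estimate gives $|R_\alpha(x,x)-R_\alpha(x,y)|\lesssim \delta(1-|x|^2)^{-(\alpha+n)}$. Since the diagonal estimate provides $R_\alpha(x,x)\geq c_0(1-|x|^2)^{-(\alpha+n)}$, choosing $\delta$ so small that the error is at most $\tfrac{c_0}{2}(1-|x|^2)^{-(\alpha+n)}$ forces $R_\alpha(x,y)\geq \tfrac{c_0}{2}(1-|x|^2)^{-(\alpha+n)}$, which with the upper bound completes the proof. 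I expect the main obstacle to be keeping all constants uniform in $x$: one must verify that the intermediate ball $B(\xi,R)$ remains in a single pseudohyperbolic neighborhood $E_{\delta'}(x)$ with $\delta'<1$ independent of $x$, and that the diagonal comparison constants do not degenerate as $|x|\to 1$. This is precisely the delicate point handled along the lines of \cite[Proposition 5]{M}.
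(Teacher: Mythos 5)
Your proof is correct, and it is essentially the argument the paper points to: the paper itself gives no details, deferring to \cite[Proposition 5]{M}, whose proof follows exactly your scheme --- the on-diagonal estimate $R_\alpha(x,x)\sim(1-|x|^2)^{-(\alpha+n)}$ from the series expansion, an interior gradient bound for the harmonic function $y\mapsto R_\alpha(x,y)$ on a slightly larger pseudohyperbolic ball, and the mean value theorem on the convex Euclidean ball $E_\delta(x)$ with $\delta$ chosen small enough to absorb the error into the diagonal lower bound. The uniformity concerns you raise at the end are genuine but are resolved exactly as you indicate, so nothing further is needed.
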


\section{ Carleson Measures}\label{s-carleson}
Characterizations of (vanishing) $\alpha$-Carleson measures for weighted harmonic Bergman spaces $b^p_\alpha$ ($\alpha>-1$)  are established by various authors in more general settings. In this subsection we will recall these results.

For $0<\delta<1$ the averaging function $\widehat{\mu}_\delta$ is defined by
\[
\widehat{\mu}_\delta(x)=\frac{\mu(E_\delta(x))}{\nu(E_\delta(x))} \qquad(x\in\mathbb{B}).
\]
More generally, for $\alpha \in \mathbb{R}$ we define
\[
\widehat{\mu}_{\alpha,\delta}(x):=\frac{\mu(E_\delta(x))}{\nu_\alpha(E_\delta(x))} \qquad(x\in\mathbb{B}).
\]
By (\ref{hyper-volume}), $\widehat{\mu}_{\alpha,\delta}(x)\sim \mu(E_\delta(x))/(1-|x|^2)^{\alpha+n}$.

Now, we cite the following characterization of  Carleson measures in terms of averaging
functions which justify the fact that the notion of $\alpha$-Carleson measures on $b^{p}_{\alpha}$ depend only on $\alpha$.

\begin{theorem}\label{Carleson-Besov2}
   Let $\mu$ be a positive Borel measure on $\mathbb{B}$,  $0<p<\infty$ and $\alpha>-1$. The following are equivalent:
  \begin{enumerate}
    \item[(a)] $\mu$ is a  $\alpha$-Carleson measure.
    \item[(b)] $\widehat{\mu}_{\alpha,\delta} \lesssim 1$ for some (every) $0<\delta<1$.

  \end{enumerate}
\end{theorem}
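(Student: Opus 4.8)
The plan is to prove the two implications separately, after first recording that condition (b) does not depend on the choice of $\delta$. For the latter, observe that whenever $\delta_1<\delta_2$ a ball $E_{\delta_2}(x)$ can be covered by a number of balls $E_{\delta_1}(z_j)$ bounded independently of $x$ (this is the geometric content behind Lemma \ref{ak}), while each center satisfies $(1-|z_j|^2)^{\alpha+n}\sim(1-|x|^2)^{\alpha+n}$ by Lemma \ref{x-y-close}; together with the trivial inclusion $E_{\delta_1}(x)\subset E_{\delta_2}(x)$ this shows that $\widehat{\mu}_{\alpha,\delta}\lesssim1$ for one radius is equivalent to the same bound for every radius. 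Hence in each direction I may work with whatever single value of $\delta$ is convenient.

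For the implication (b)$\Rightarrow$(a) I would fix a covering sequence $\{a_k\}$ as in Lemma \ref{ak} for a radius $\delta$, together with a slightly larger radius $\delta'$. The key local ingredient is the sub-mean value estimate for harmonic functions,
\[
\sup_{y\in E_\delta(a_k)}|f(y)|^p \lesssim \frac{1}{\nu_\alpha(E_{\delta'}(a_k))}\int_{E_{\delta'}(a_k)}|f|^p\,d\nu_\alpha ,
\]
proved exactly as the pointwise bound in Lemma \ref{growth} by transporting a fixed Euclidean ball through a M\"obius map. Combining this with the hypothesis $\mu(E_\delta(a_k))\lesssim(1-|a_k|^2)^{\alpha+n}\sim\nu_\alpha(E_{\delta'}(a_k))$, which is (\ref{hyper-volume}), gives $\int_{E_\delta(a_k)}|f|^p\,d\mu\lesssim\int_{E_{\delta'}(a_k)}|f|^p\,d\nu_\alpha$. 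Summing over $k$ and using the finite-overlap property (iii) of Lemma \ref{ak} (which survives the enlargement to $\delta'$) produces $\int_{\mathbb{B}}|f|^p\,d\mu\lesssim\|f\|_{b^p_\alpha}^p$, that is, (a).

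For the implication (a)$\Rightarrow$(b) I would test the Carleson inequality on the harmonic reproducing kernels. Fix $a\in\mathbb{B}$ and pick a real $\beta$ large enough that $p(\beta+n)>\alpha+n$; set $f_a=R_\beta(\cdot,a)\in b^p_\alpha$. By Lemma \ref{Kernel-two-sided} there is a $\delta$ with $R_\beta(x,a)\sim(1-|a|^2)^{-(\beta+n)}$ for $x\in E_\delta(a)$, so
\[
\int_{\mathbb{B}}|f_a|^p\,d\mu \ge \int_{E_\delta(a)}|R_\beta(x,a)|^p\,d\mu(x) \gtrsim (1-|a|^2)^{-p(\beta+n)}\,\mu(E_\delta(a)).
\]
On the other hand the pointwise bound $|R_\beta(x,a)|\lesssim[x,a]^{-(\beta+n)}$ from Lemma \ref{R-alpha}, together with the standard integral estimate $\int_{\mathbb{B}}(1-|x|^2)^\alpha[x,a]^{-d}\,d\nu(x)\sim(1-|a|^2)^{\alpha+n-d}$ valid for $d=p(\beta+n)>\alpha+n$, yields $\|f_a\|_{b^p_\alpha}^p\lesssim(1-|a|^2)^{\alpha+n-p(\beta+n)}$. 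Inserting both bounds into $\int|f_a|^p\,d\mu\lesssim\|f_a\|_{b^p_\alpha}^p$ and cancelling the common power of $(1-|a|^2)$ gives $\mu(E_\delta(a))\lesssim(1-|a|^2)^{\alpha+n}$ uniformly in $a$, which is (b).

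The main obstacle in both directions is uniformity over the full range $0<p<\infty$. In (a)$\Rightarrow$(b) the ordinary kernel $R_\alpha(\cdot,a)$ is not $p$-integrable against $\nu_\alpha$ once $p\le1$; passing to a higher-order kernel $R_\beta$ with $\beta$ large circumvents this and treats all $p$ by a single choice of test function. In (b)$\Rightarrow$(a) the delicate point is that $|f|^p$ need not be subharmonic for $0<p<1$, so the local sub-mean value estimate cannot be extracted directly from subharmonicity; I would instead justify it as in Lemma \ref{growth}, reducing by a M\"obius change of variables to the fixed-ball inequality $|u(0)|^p\lesssim\int_{B(0,r)}|u|^p\,d\nu$, which holds for every harmonic $u$ and every $p>0$.
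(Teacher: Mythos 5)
Your proposal is correct, but it takes a genuinely different route from the paper, for the simple reason that the paper contains no argument of its own: its ``proof'' of Theorem \ref{Carleson-Besov2} is a citation of \cite[Theorem 3.5]{CLN1}, where the equivalence is established for $\alpha=0$ on smoothly bounded domains, plus the assertion that the same proof works for all $\alpha>-1$. You instead give the standard self-contained argument on the ball, and all of its ingredients check out. For (b)$\Rightarrow$(a) you combine the covering sequence of Lemma \ref{ak} with the Fefferman--Stein-type sub-mean value inequality $|u(x)|^p\lesssim r^{-n}\int_{B(x,r)}|u|^p\,d\nu$ for harmonic $u$ and \emph{all} $p>0$; your caution that this cannot be deduced from subharmonicity when $p<1$ is exactly right, and this is the same fact the paper itself imports as \cite[Lemma 3.3]{DOG} in the proof of Proposition \ref{product-carleson}. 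For (a)$\Rightarrow$(b) you test the Carleson inequality on $R_\beta(\cdot,a)$ with $\beta$ large, using Lemmas \ref{R-alpha} and \ref{Kernel-two-sided} together with the Forelli--Rudin-type estimate $\int_{\mathbb{B}}(1-|x|^2)^\alpha[x,a]^{-d}\,d\nu(x)\sim(1-|a|^2)^{\alpha+n-d}$ for $d>\alpha+n$; this estimate is not stated in the paper but is standard in this literature (see, e.g., \cite{GKU2}), so invoking it is legitimate. Your preliminary reduction --- that condition (b) for one radius implies it for every radius, via a bounded covering of a larger pseudohyperbolic ball by smaller ones with centers comparable through Lemma \ref{x-y-close} --- is also needed, correct, and is precisely what justifies the ``some (every)'' phrasing. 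What the paper's citation buys is brevity; what your argument buys is a proof that visibly works for every $\alpha>-1$ and every $0<p<\infty$ on the ball, thereby substantiating both the extension to general $\alpha$ that the paper merely asserts and the $p$-independence of the Carleson property claimed in the introduction as a consequence of this theorem.
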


Note that (b) is equivalent to
\[
\mu(E_\delta(x)) \lesssim (1-|x|^2)^{(\alpha+n)} \quad \text{for some (every) $0<\delta<1$.}
\]

\begin{proof}
   Equivalence of (a) and (b) for $\alpha=0$ is proved in \cite[Theorem 3.5]{CLN1} for bounded smooth domains. The proof works equally well for other $\alpha$ too.
\end{proof}
 We also need the following proposition. The proof is similar to \cite[Proposition 3.6]{DOG2}, but for the sake of completeness,we give the details of it.
\begin{proposition}\label{product-carleson}
Let $\mu$ be a positive Borel measure on $\mathbb{B}$. Let $\alpha_{1}>0$ and $-1<\alpha_{2} <\infty$ and
let
\[
\varrho=\alpha_{1}+\alpha_{2}.
\]
If  $\mu$ is a $\varrho$-Carleson measure, then
\begin{equation*}
\int_{\mathbb{B}}|f(x)||g(x)| \, d\mu(x)\lesssim \|f\|_{b^{\infty}_{\alpha_{1}}}\|g\|_{b^{1}_{\alpha_{2}}} \qquad (f\in b^{\infty}_{\alpha_{1}}, g \in b^{1}_{\alpha_{2}}).
\end{equation*}
\end{proposition}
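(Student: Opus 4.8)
The plan is to peel off the Bloch factor $f$ by a pointwise bound and thereby reduce the entire statement to a single Carleson-measure estimate for an auxiliary measure built from $\mu$ and the weight coming from $f$.

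First I would exploit the hypothesis $\alpha_1>0$, which makes the choice $t=0$ admissible in Definition \ref{definition of the h B-B space}, since then $\alpha_1+t>0$ holds. With $t=0$ the operator $D^0_s$ is the identity, so the Bloch norm collapses to $\|f\|_{b^\infty_{\alpha_1}}=\sup_{x\in\mathbb{B}}(1-|x|^2)^{\alpha_1}|f(x)|$, yielding the pointwise bound
\[
|f(x)|\le \frac{\|f\|_{b^\infty_{\alpha_1}}}{(1-|x|^2)^{\alpha_1}},\qquad x\in\mathbb{B}.
\]
Inserting this into the integral gives
\[
\int_{\mathbb{B}}|f||g|\,d\mu \lesssim \|f\|_{b^\infty_{\alpha_1}}\int_{\mathbb{B}}|g(x)|(1-|x|^2)^{-\alpha_1}\,d\mu(x).
\]
Hence it suffices to prove $\int_{\mathbb{B}}|g|\,d\lambda\lesssim\|g\|_{b^1_{\alpha_2}}$, where $d\lambda(x)=(1-|x|^2)^{-\alpha_1}d\mu(x)$; that is, that $\lambda$ is an $\alpha_2$-Carleson measure. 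Because $-1<\alpha_2<\infty$, this is exactly the range in which $\alpha_2$-Carleson measures for $b^1_{\alpha_2}$ are defined, so I can invoke Theorem \ref{Carleson-Besov2}.

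To verify the Carleson condition for $\lambda$, I would estimate its averaging function directly. On each pseudohyperbolic ball $E_\delta(x)$ Lemma \ref{x-y-close} gives $1-|y|^2\sim 1-|x|^2$, so
\[
\lambda(E_\delta(x))=\int_{E_\delta(x)}(1-|y|^2)^{-\alpha_1}\,d\mu(y)\sim (1-|x|^2)^{-\alpha_1}\mu(E_\delta(x)).
\]
Now I use that $\mu$ is a $\varrho$-Carleson measure with $\varrho=\alpha_1+\alpha_2$: by the averaging-function form of Theorem \ref{Carleson-Besov2}, $\mu(E_\delta(x))\lesssim(1-|x|^2)^{\varrho+n}=(1-|x|^2)^{\alpha_1+\alpha_2+n}$. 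Combining the last two displays,
\[
\lambda(E_\delta(x))\lesssim (1-|x|^2)^{-\alpha_1}(1-|x|^2)^{\alpha_1+\alpha_2+n}=(1-|x|^2)^{\alpha_2+n},
\]
so $\lambda$ meets the averaging criterion and is therefore an $\alpha_2$-Carleson measure. Feeding this back through the reduction completes the argument.

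The argument is essentially computational and the steps are short; there is no serious obstacle, only two points that must be handled with care, and it is precisely here that both hypotheses of the proposition are consumed. The first is that the clean pointwise bound on $f$ genuinely requires $\alpha_1>0$, so that $t=0$ is admissible; for general $\alpha_1$ one would instead have to control $f$ through $D^t_s f$ together with a growth estimate, and the neat factorization would break down. The second is the legitimacy of reducing to a Carleson condition on $\lambda$, which hinges on $\alpha_2>-1$ so that Theorem \ref{Carleson-Besov2} applies to $b^1_{\alpha_2}$. Note also that no finiteness of $\mu$ or $\lambda$ is needed, since the averaging-function characterization is purely local.
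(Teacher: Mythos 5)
Your proof is correct, but it takes a genuinely different route from the paper's. You peel off $f$ at the very start via the exact pointwise bound $|f(x)|\le\|f\|_{b^{\infty}_{\alpha_{1}}}(1-|x|^2)^{-\alpha_{1}}$ (legitimate because $\alpha_{1}>0$ permits $t=0$ in Definition \ref{definition of the h B-B space}), and reduce the whole bilinear estimate to the claim that $d\lambda(x)=(1-|x|^2)^{-\alpha_{1}}d\mu(x)$ is an $\alpha_{2}$-Carleson measure; you verify this by transferring the averaging bound for $\mu$ to $\lambda$ with Lemma \ref{x-y-close} and then invoking Theorem \ref{Carleson-Besov2} in \emph{both} directions --- (a)$\Rightarrow$(b) for $\mu$, and (b)$\Rightarrow$(a) with $p=1$ for the auxiliary measure $\lambda$. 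The paper starts from the same pointwise bound (to get $\|fg\|_{L^{1}_{\varrho}}\lesssim\|f\|_{b^{\infty}_{\alpha_{1}}}\|g\|_{b^{1}_{\alpha_{2}}}$), but then proves the embedding $\int_{\mathbb{B}}|fg|\,d\mu\lesssim\|fg\|_{L^{1}_{\varrho}}$ by hand: a sub-mean-value estimate for the product $|fg|$ (via \cite[Lemma 3.3]{DOG}), the covering sequence of Lemma \ref{ak}, and only the (a)$\Rightarrow$(b) direction of Theorem \ref{Carleson-Besov2}. Your argument is shorter and lossless, since $f$ enters only through its sup-norm, so nothing is given up in the factorization; the price is using the two-sided Carleson equivalence as a black box on a measure built from the data. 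The paper's covering argument is heavier but more robust: it treats $fg$ as a single object, so it adapts to settings where both factors carry integral norms and neither can be removed pointwise without loss (exactly the situation of \cite[Proposition 3.6]{DOG2}, on which the paper models its proof), and it consumes only one direction of the characterization.
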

\begin{proof}
Let $f\in b^{\infty}_{\alpha_{1}}, g \in b^{1}_{\alpha_{2}}$. By definitions of $b^{\infty}_{\alpha_{1}}$ and $b^{1}_{\alpha_{2}}$,
\begin{align}
\|fg\|_{L^{1}_{\varrho}}\nonumber
&=\frac{1}{V_{\varrho}}\int_{\mathbb{B}} |f(x)g(x)|(1-|x|^{2})^{\varrho}d\nu(x) \nonumber\\
&\lesssim \|f\|_{b^{\infty}_{\alpha_{1}}}\int_{\mathbb{B}} |g(x)|(1-|x|^{2})^{\alpha_{2}}d\nu(x) \nonumber\\
&\lesssim \|f\|_{b^{\infty}_{\alpha_{1}}}\|g\|_{b^{1}_{\alpha_{2}}}. \label{ineq1}
\end{align}
Thus, $fg\in L^{1}_{\varrho}$. Let $0<\delta<1$. Because $E_{\delta/2}(x)$ is an Euclidean ball with center $c=(1-(\delta/2)^2)x/(1-(\delta/2)^2|x|^2)$ and the radius behaves like $1-|x|^{2}$ when $\delta/2$ is fixed, it follows from \cite[Lemma 3.3]{DOG} that
\begin{equation*}
 |f(x)g(x)|\lesssim  \frac{1}{r^{n}}\int_{B(x,r)} |f(y)g(y)| d\nu(y),
\end{equation*}
whenever $B(x,r)=\{y:|y-x|<r\}\subset E_{\delta/2}(x)$ for all  $x\in \mathbb{B}$. This  immediately leads to the  pointwise
estimate
\begin{equation*}
 |f(x)g(x)|\lesssim  \frac{1}{(1-|x|^2)^{n+\varrho}}\int_{E_{\delta/2}(x)} |f(y)g(y)| (1-|y|^2)^{\varrho}d\nu(y)
\end{equation*}
for all  $x\in \mathbb{B}$. For $a\in \mathbb{B}$ and $x \in E_{\delta/2}(a)$, we note that $E_{\delta/2}(x) \subset E_{\delta}(a)$. Let $E_{\delta/2}(a_{k})$ be the associated sets to the sequence $\{a_k\}=\{a_k(\delta/2)\}$ in  Lemma \ref{ak}. Thus we have
\begin{align*}
 &|f(x)g(x)|\\
 & \lesssim  \frac{1}{(1-|x|^2)^{n+\varrho}}\int_{E_{\delta/2}(x)} |f(y)g(y)| (1-|y|^2)^{\varrho}d\nu(y)\\
 &\lesssim  \frac{1}{(1-|x|^2)^{n+\varrho}}\int_{E_{\delta}(a_{k})} |f(y)g(y)| (1-|y|^2)^{\varrho}d\nu(y), \quad x\in E_{\delta/2}(a_{k})
\end{align*}
 for $k=1,2, \dots$. Then by Lemma \ref{ak} and Lemma \ref{x-y-close}, we have
\begin{align}
&\int_{\mathbb{B}} |f(x)g(x)| \,  d\mu(x)\nonumber\\
&\lesssim \sum_{k=1}^{\infty}\int_{E_{\delta/2}(a_{k})} |f(x)g(x)| \, d\mu(x) \nonumber\\
&\lesssim \sum_{k=1}^{\infty}\int_{E_{\delta}(a_{k})} |f(y)g(y)| (1-|y|^2)^{\varrho}d\nu(y)\nonumber\\
&\times \int_{E_{\delta/2}(a_{k})}\frac{d\mu(x)}{(1-|x|^2)^{n+\varrho}}\nonumber\\
&\lesssim \sum_{k=1}^{\infty} \frac{\mu(E_{\delta/2}(a_{k}))}{(1-|a_{k}|^2)^{n+\varrho}}\int_{E_{\delta}(a_{k})} |f(y)g(y)| (1-|y|^2)^{\varrho}d\nu(y).\label{ineq2}
\end{align}
 Since $\mu$ is a $(1,\varrho)$-Bergman-Carleson measure, by Theorem \ref{Carleson-Besov2} we have
\begin{equation*}
 \mu(E_{\delta/2}(a_{k}))\lesssim (1-|a_{k}|^{2})^{n+\varrho}.
\end{equation*}
Then it follows from this together with (\ref{ineq2}) and Lemma \ref{ak} that
\begin{align}
\int_{\mathbb{B}}|f(x)g(x)| \, d\mu(x)&\lesssim \sum_{k=1}^{\infty} \int_{E_{\delta}(a_{k})} |f(y)g(y)| (1-|y|^2)^{\varrho}d\nu(y)\nonumber \\
&\leq N \|fg\|_{L^{1}_{\varrho}}\lesssim \|fg\|_{L^{1}_{\varrho}} \label{ineq3},
\end{align}
where $N$ is the number provided by  Lemma \ref{ak}. Com\-bin\-ing (\ref{ineq1}) and (\ref{ineq3}) concludes the proof.
\end{proof}

Now we will characterize vanishing $\alpha$-Carleson measures.

\begin{theorem}\label{Carleson-Besov3}
   Let $\mu$ be a positive Borel measure on $\mathbb{B}$,  $0<p<\infty$ and $\alpha>-1$. The following are equivalent:
  \begin{enumerate}
    \item[(a)] $\mu$ is a vanishing $\alpha$-Carleson measure.
    \item[(b)] $\lim_{|x|\to 1^{-}}(1-|x|^2)^{(\alpha+n)(1-\lambda)} \, \widehat{\mu}_{\alpha,\varepsilon}(x) =0 $ for some (every) $0<\varepsilon<1$.
     \item[(c)] $\lim_{k\to \infty}(1-|a_{k}|^{2})^{(n+\alpha)(1-\lambda)}\widehat{\mu}_{\alpha,\delta}(a_{k}) =0$ for some (every) $0<\delta<1$.
  \end{enumerate}
\end{theorem}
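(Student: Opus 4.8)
The plan is to prove the cycle (a) $\Rightarrow$ (b) $\Rightarrow$ (c) $\Rightarrow$ (a), reducing the ``some/every'' alternative on the radius to the comparability of the averaging functions $\widehat{\mu}_{\alpha,\varepsilon}$ across different radii; this comparability is a routine consequence of the finite covering in Lemma \ref{ak} together with Lemma \ref{x-y-close}, so I would fix one radius in each implication and restore the quantifiers at the end. Throughout I would use $\nu_\alpha(E_\delta(x))\sim(1-|x|^2)^{\alpha+n}$ from (\ref{hyper-volume}), so that $\widehat{\mu}_{\alpha,\delta}(x)\sim\mu(E_\delta(x))/(1-|x|^2)^{\alpha+n}$, and that the statement is $p$-independent by design, so $p$ may be fixed once and for all.

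For (a) $\Rightarrow$ (b) I would test against a normalized family concentrated near the boundary. For $a\in\mathbb{B}$ I would take kernel-type functions $f_a(x)=(1-|a|^2)^{c}\,R_\beta(x,a)$ with $\beta>-n$ and the exponent $c$ chosen so that $\|f_a\|_{b^p_\alpha}\lesssim 1$, the standard normalization dictated by Lemma \ref{R-alpha} and Forelli--Rudin type integrals. As $|a|\to1^-$ these functions tend to $0$ uniformly on compact subsets of $\mathbb{B}$, so any sequence $f_{a_k}$ with $|a_k|\to1$ is admissible in the definition of a vanishing $\alpha$-Carleson measure. On the other hand, by Lemma \ref{Kernel-two-sided} there is a $\delta$ for which $R_\beta(x,a)\sim(1-|a|^2)^{-(\beta+n)}$ on $E_\delta(a)$, whence $|f_a(x)|^p\gtrsim(1-|a|^2)^{-(\alpha+n)}$ there after matching the exponents, giving
\[
\int_{\mathbb{B}}|f_a|^p\,d\mu\ \gtrsim\ \frac{\mu(E_\delta(a))}{(1-|a|^2)^{\alpha+n}}\ \sim\ \widehat{\mu}_{\alpha,\delta}(a).
\]
Letting $|a|\to1^-$ and invoking (a) forces the averaging quantity on the right to vanish, which is (b); the weight $(1-|x|^2)^{(\alpha+n)(1-\lambda)}$ would be absorbed into the choice of the normalizing exponent $c$.

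The implication (b) $\Rightarrow$ (c) is immediate: restrict the boundary limit to the lattice $\{a_k\}$ of Lemma \ref{ak}, along which $|a_k|\to1$. For (c) $\Rightarrow$ (a) I would take $\{f_k\}\subset b^p_\alpha$ with $\|f_k\|_{b^p_\alpha}\le1$ and $f_k\to0$ locally uniformly, and split $\mathbb{B}=\{|x|\le r\}\cup\{|x|>r\}$. On the compact core $\int_{|x|\le r}|f_k|^p\,d\mu\to0$ by uniform convergence and finiteness of $\mu$ on compacta. For the boundary layer I would reproduce the sub-mean-value estimate used in Proposition \ref{product-carleson}: using the covering $\bigcup E_\delta(a_k)=\mathbb{B}$ with bounded overlap $N$ (Lemma \ref{ak}) and Lemma \ref{x-y-close}, I would dominate
\[
\int_{|x|>r}|f_k|^p\,d\mu\ \lesssim\ \sum_{|a_j|>r}\widehat{\mu}_{\alpha,\delta}(a_j)\int_{E_\delta(a_j)}|f_k(y)|^p(1-|y|^2)^{\alpha}\,d\nu(y),
\]
and bound the averaging factors by $\sup_{|a_j|>r}\widehat{\mu}_{\alpha,\delta}(a_j)$, which is small by (c) once $r$ is close to $1$, times the finite-overlap sum $\lesssim\|f_k\|_{b^p_\alpha}^p\le1$. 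Choosing $r$ first and then $k$ large yields the limit $0$, that is (a).

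The main obstacle is the (a) $\Rightarrow$ (b) step: one must simultaneously normalize the test family so that $\|f_a\|_{b^p_\alpha}\lesssim1$ with the exact power of $1-|a|^2$ that reproduces the stated weight, and verify that this same family decays uniformly on compacta as $|a|\to1$, so that it is legitimately admissible in the definition of a vanishing Carleson measure. Matching these two requirements against the kernel asymptotics of Lemmas \ref{R-alpha} and \ref{Kernel-two-sided}, and checking that the lower bound on $E_\delta(a)$ survives with the correct exponent, is the computational heart of the argument; the remaining implications are soft consequences of the lattice covering in Lemma \ref{ak} and the comparability Lemmas \ref{x-y-close} and \ref{Bracket-Hyper}.
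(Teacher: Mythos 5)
Your proposal is correct in substance, but note that it is not a variant of ``the paper's proof'' --- the paper does not actually prove this theorem. Its entire proof is a citation: the equivalence of (a), (b), (c) is quoted from \cite{CLN2} (Theorem 3.5 there) for $\alpha=0$ on smoothly bounded domains, together with the remark that the argument extends to all $\alpha>-1$. Your cycle (a) $\Rightarrow$ (b) $\Rightarrow$ (c) $\Rightarrow$ (a) is essentially a self-contained reconstruction of that hidden argument in the ball setting: normalized kernel test functions $f_a=(1-|a|^2)^c R_\beta(\cdot,a)$ pushed to the boundary for the necessity half (via Lemmas \ref{R-alpha} and \ref{Kernel-two-sided}), and the lattice/sub-mean-value machinery of Lemma \ref{ak} and Proposition \ref{product-carleson} for the sufficiency half. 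What your route buys is a complete proof using only objects already developed in this paper, where the kernel asymptotics are explicit; what the paper's route buys is brevity, at the cost of leaving the $\alpha\neq 0$ extension and the domain change (smooth bounded domain versus ball) unverified.

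Three details in your sketch need to be pinned down, though none is a structural gap. First, the normalization: with $c=(\beta+n)-(\alpha+n)/p$, the Forelli--Rudin estimate gives $\|f_a\|_{b^p_\alpha}\lesssim 1$ only when $c>0$, so you must choose $\beta$ sufficiently large, not merely $\beta>-n$; conveniently, $c>0$ is also exactly the condition that makes $f_a\to 0$ uniformly on compacta, so one choice serves both requirements. Second, in your displayed inequality for (c) $\Rightarrow$ (a), the ball on the right-hand side must be strictly larger than the covering ball --- as in Proposition \ref{product-carleson}, cover $\mathbb{B}$ by the balls $E_{\delta/2}(a_j)$ and integrate over $E_{\delta}(a_j)$; with the same $E_\delta(a_j)$ on both sides the sub-mean-value inequality is not available. (One then uses $\mu(E_{\delta/2}(a_j))\lesssim \mu(E_{\delta}(a_j))$ and $\nu_\alpha(E_{\delta/2}(a_j))\sim\nu_\alpha(E_{\delta}(a_j))$ to return to the averaging functions of (c).) Third, the weight $(1-|x|^2)^{(\alpha+n)(1-\lambda)}$: the parameter $\lambda$ is never defined in this paper --- it is an artifact of the two-exponent statements in \cite{CLN2} and \cite{DOG2} --- and the intended reading here is $\lambda=1$, which makes the weight identically $1$. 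Your suggestion to ``absorb'' it into the exponent $c$ cannot be made precise for an unspecified $\lambda$, but this is moot: what you prove is exactly the $\lambda=1$ statement, which is the form the paper actually invokes in the proof of Theorem \ref{Theorem-2}.
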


\begin{proof}
  Equivalence of (a), (b) and (c)  for $\alpha=0$  is proved in \cite[Theorem 3.5]{CLN2} for bounded smooth domains.  The proof works equally well for other $\alpha$ too.
\end{proof}

\section{Proofs of Theorems \ref{Theorem-1} and \ref{Theorem-2} } \label{proof1}
In this section we will prove Theorems \ref{Theorem-1} and \ref{Theorem-2}. Before that we present a very useful intertwining relation for transforming certain
problems for Toeplitz operators between weighted Bloch spaces $b^{\infty}_{\alpha}$, $\alpha\in \mathbb{R}^{n}$ to similar problems for classical Toeplitz operators between weighted Bloch spaces when $\alpha>0$ . The harmonic and holomorphic Bergman-Besov space versions are in \cite{DOG2} and \cite{AK}, respectively.
\begin{theorem}\label{intertwining relation}
We have $D^t_s ({_{s,t}}T_{\mu})= ({_{s+t}}T_{\kappa}) D^t_s $, where
\[
{_{s+t}}T_{\kappa}f(x)=\frac{1}{V_{s}} \int_{\mathbb{B}} R_{s+t}(x,y) f(y) d\kappa(y)
\]
 is classical Toeplitz operator from  $b^{\infty}_{\alpha_{1}+t}$ to $b^{\infty}_{\alpha_{2}+t}$. Consequently,
 \[
({_{s,t}}T_{\mu})=D^{-t}_{s +t}({_{s+t}}T_{\kappa}) D^t_{s}, \qquad ({_{s+t}}T_{\kappa})=D^{t}_{s}({_{s,t}}T_{\mu})D^{-t}_{s+t}.
\]
\end{theorem}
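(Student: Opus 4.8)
The plan is to establish the operator identity by a direct computation at the level of the defining integrals, moving the radial differential operator $D^t_s$ (acting in the $x$-variable) inside the integral and invoking the kernel relation \eqref{**}. I would begin from the integral representation already recorded for the general Toeplitz operator,
\[
{_{s,t}}T_{\mu}f(x) = \frac{1}{V_{s}} \int_{\mathbb{B}} R_s(x,y)\, D^t_s f(y)\, d\kappa(y),
\]
which is precisely the second line of the definition of ${_{s,t}}T_\mu$ after substituting $d\kappa(y)=(1-|y|^2)^{s+t}d\mu(y)$ and $I^t_s f(y)=(1-|y|^2)^t D^t_s f(y)$.

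Next I would apply $D^t_s$ in the variable $x$ to both sides. Since $R_s(\cdot,y)$ is harmonic on $\mathbb{B}$ and, by \eqref{**}, $D^t_s R_s(x,y)=R_{s+t}(x,y)$, the crux of the argument is the interchange
\[
D^t_s\Bigl[\int_{\mathbb{B}} R_s(\cdot,y)\,D^t_s f(y)\,d\kappa(y)\Bigr](x) = \int_{\mathbb{B}} R_{s+t}(x,y)\,D^t_s f(y)\,d\kappa(y).
\]
Granting this, dividing by $V_s$ shows that the resulting function is exactly ${_{s+t}}T_\kappa(D^t_s f)(x)$, which gives $D^t_s({_{s,t}}T_\mu f)={_{s+t}}T_\kappa(D^t_s f)$, that is, the asserted relation $D^t_s({_{s,t}}T_\mu)=({_{s+t}}T_\kappa)D^t_s$.

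The step I expect to require the most care is justifying moving $D^t_s$ under the integral, since $D^t_s$ is defined through homogeneous expansions rather than as a local differential operator. I would handle this by working with the expansion $R_s(x,y)=\sum_k \gamma_k(s) Z_k(x,y)$: each $Z_k(\cdot,y)$ is a homogeneous harmonic polynomial of degree $k$ in $x$, so after swapping sum and integral (absolute convergence on compact subsets following from the kernel and growth estimates) the $k$-th homogeneous part in $x$ of ${_{s,t}}T_\mu f$ is $\tfrac{\gamma_k(s)}{V_s}\int_{\mathbb{B}} Z_k(x,y)D^t_s f(y)\,d\kappa(y)$. Since $D^t_s$ multiplies this part by $\gamma_k(s+t)/\gamma_k(s)$ and $\sum_k \gamma_k(s+t)Z_k(x,y)=R_{s+t}(x,y)$, summing over $k$ reproduces the right-hand side above. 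The interchange is legitimate because the integrals converge absolutely for $f$ in the domain space and $D^t_s$ is continuous on $h(\mathbb{B})$ in the topology of uniform convergence on compact subsets, as recalled in the preliminaries.

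Finally I would identify ${_{s+t}}T_\kappa$ as a classical Toeplitz operator between the shifted Bloch spaces. Taking parameters $s'=s+t$ and $t'=0$ in the general definition, the map $I^{0}_{s+t}$ is the inclusion, and the admissibility conditions \eqref{two} and \eqref{alpha+pt} read $s+t>(\alpha_i+t)-1$ and $(\alpha_i+t)+0>0$; both hold by $s>\alpha_i-1$ from \eqref{Equ-1} and by $\alpha_i+t>0$. Since by Lemma \ref{Apply-Dst} the map $D^t_s$ is an isomorphism $b^\infty_{\alpha_i}\to b^\infty_{\alpha_i+t}$, the source and target spaces match, so ${_{s+t}}T_\kappa$ indeed acts from $b^\infty_{\alpha_1+t}$ into $b^\infty_{\alpha_2+t}$. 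The two "consequently" identities then follow at once by composing the established relation on the left and on the right with the two-sided inverse $D^{-t}_{s+t}$ of $D^t_s$ from \eqref{*}.
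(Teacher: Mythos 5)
Your proposal is correct and follows essentially the same route as the paper: apply $D^t_s$ under the integral sign, invoke the kernel relation \eqref{**} to turn $R_s$ into $R_{s+t}$, and obtain the two displayed identities by composing with the two-sided inverse $D^{-t}_{s+t}$ from \eqref{*}. Your extra care in justifying the interchange via the homogeneous expansion of $R_s(x,y)$ and the continuity of $D^t_s$ on $h(\mathbb{B})$, and in checking the parameter conditions \eqref{two} and \eqref{alpha+pt} for the shifted spaces, only fills in details the paper leaves implicit under the phrase ``differentiation under the integral sign.''
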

\begin{proof}
By differentiation under the integral sign and (\ref{**}), we have
\begin{align*}
D^t_s ({_{s,t}}T_{\mu}f)(x)&=\frac{1}{V_{s}} \int_{\mathbb{B}} R_{s+t}(x,y) D^t_{s}f(y)  d\kappa(y)\\
&=({_{s+t}}T_{\kappa}) (D^t_sf)(x) \qquad (f\in b^{\infty}_{\alpha_{1}}).
\end{align*}
For the second and third assertions, we note that $(D^t_{s})^{-1}=D^{-t}_{s +t}$ by (\ref{*}).
\end{proof}

By Theorem \ref{intertwining relation}, ${_{s,t}}T_{\mu}$ is bounded from $b^{\infty}_{\alpha_{1}}$ to $b^{\infty}_{\alpha_{2}}$ if and only if  ${_{s+t}}T_{\kappa}$ is bounded from $b^{\infty}_{\alpha_{1}+t}$ to $b^{\infty}_{\alpha_{2}+t}$. With all the preparation done in earlier sections, now we are ready to prove the main result.
\subsection{Proof of Theorem \ref{Theorem-1} }
\textbf{(i) Implies (ii).}
Let ${_{s,t}}T_{\mu}: b^{\infty}_{\alpha_{1}}\to b^{\infty}_{\alpha_{2}}$ is bounded. First note that $[x,y]\gtrsim(1-|x|^2) $ and $[x,y]\gtrsim (1-|y|^2) $ for $x,y \in \mathbb{B}$. Then fix $x\in \mathbb{B}$ and consider $R_{s+t}(x,.)$.  Under the condition $n+s+t>\alpha_{1}+t$ provided by  (\ref{Equ-1}), it is easy to check using Lemma \ref{R-alpha} that $R_{s+t}(x,.) \in b^{\infty}_{\alpha_{1}+t}$ with
\[
\|R_{s+t}(x,.)\|b^{\infty}_{\alpha_{1}+t}\lesssim (1-|x|^{2})^{\alpha_{1}-(n+s)}.
\]
Take $\delta=\delta_{0}$ where $\delta_{0}$ is the number provided by Lemma \ref{Kernel-two-sided}. By Lemma \ref{x-y-close} and Lemma \ref{Kernel-two-sided}, we have
\begin{align*}
\kappa(E_{\delta}(x))&\lesssim \frac{V_{\alpha_{1}}}{V_{s}} (1-|x|^{2})^{2(n+s+t)}\int_{E_{\delta}(x)} |R_{s+t}(x,y)|^{2} d\kappa(y)\\
&\lesssim \frac{V_{\alpha_{1}}}{V_{s}} (1-|x|^{2})^{2(n+s+t)}\int_{\mathbb{B}} |R_{s+t}(x,y)|^{2} d\kappa(y)\\
&=(1-|x|^{2})^{2(n+s+t)}{_{s+t}}T_{\kappa}[R_{s+t}(x,.)](x),
\end{align*}
and therefore
\begin{align*}
\widehat{\kappa}_{\gamma,\delta}(x)&=\frac{\kappa(E_\delta(x))}{\nu_\gamma(E_\delta(x))}\\
&\lesssim (1-|x|^{2})^{2(n+s+t))-(n+\gamma)}{_{s+t}}T_{\kappa}[R_{s+t}(x,.)](x).
\end{align*}

On the other hand, by the definition of $b^{\infty}_{\alpha}, \, \alpha>0$ together with the boundedness of the Toeplitz operator ${_{s+t}}T_{\kappa}$ and an inequality above, we get

\begin{align*}
{_{s+t}}T_{\kappa}[R_{s+t}(x,.)](x)&= |{_{s+t}}T_{\kappa}[R_{s+t}(x,.)](x)|\\
&\lesssim (1-|x|^{2})^{-t-\alpha_{2}}\|{_{s+t}}T_{\kappa}[R_{s+t}(x,.)]\|_{b^{\infty}_{\alpha_{2}+t}}\\
&\lesssim (1-|x|^{2})^{-t-\alpha_{2}}\|{_{s+t}}T_{\kappa}\|\|R_{s+t}(x,.)\|_{b^{\infty}_{\alpha_{1}+t}}\\
&\lesssim (1-|x|^{2})^{-t-\alpha_{2}-n-s+\alpha_{1}}\|{_{s+t}}T_{\kappa}\|,
\end{align*}
where $\|{_{s+t}}T_{\kappa}\|$ denote the operator norm of ${_{s+t}}T_{\kappa}:b^{\infty}_{\alpha_{1}+t}\to b^{\infty}_{\alpha_{2}+t}$.
Combining these estimates we have
\begin{equation*}
\widehat{\kappa}_{\gamma,\delta}(x)\lesssim \|{_{s+t}}T_{\kappa}\|.
\end{equation*}
By Theorem \ref{Carleson-Besov2} this means that $\kappa$ is a $\gamma$-Carleson measure.

\textbf{(ii) Implies (i).}
Now suppose $(ii)$ holds, that is, $\kappa$ is a $\gamma$-Carleson measure. Let  $\alpha'_{2}$ be a number satisfying
\begin{equation}\label{firstss-implies2}
\alpha'_{2}=s-\alpha_{2}>-1.
\end{equation}
 Since $\alpha'_{2}>-1$ and $\alpha_{2}+t>0$, applying Theorem \ref{Theorem-Dual-of-b1q} (with $s'=\alpha'_{2}+\alpha_{2}+t$ and $t'=0$), we get that the dual of $b^{1}_{\alpha'_{2}}$ can be identified with $b^{\infty}_{\alpha_{2}+t}$ under the pairing
\begin{equation*}
[f,g]_{b^2_{s+t}}=\int_{\mathbb{B}}f(x)\overline{g(x)} \, d\nu_{s+t}(x).
\end{equation*}

Let $f\in b^{\infty}_{\alpha_{1}+t}$ and $h\in b^{1}_{\alpha'_{2}}$. By using Fubini’s theorem and
the reproducing formula (1.5) of \cite{GKU2}, since  $\alpha'_{2}>-1$ and $\alpha'_{2}<s+t$ by the $\alpha_{2}+t>0$, we get that
\begin{align*}
[h,{_{s+t}}T_{\kappa}f]_{b^2_{s+t}}& =\frac{1}{V_{s}} \int_{\mathbb{B}}h(y)\int_{\mathbb{B}} R_{s+t}(x,y)\overline{f(x)}d\kappa(x) \, d\nu_{s+t}(y)\\
&=\frac{1}{V_{s}} \int_{\mathbb{B}}\left(\int_{\mathbb{B}} R_{s+t}(x,y) h(y) d\nu_{s+t}(y)\right) \overline{f(x)}\, d\kappa(x)\\
&=\frac{1}{V_{s}} \int_{\mathbb{B}}h(x)\overline{f(x)} \, d\kappa(x).
\end{align*}
The condition for  $\gamma$ in the theorem is equivalent to
\[
\gamma=\alpha_{1}+t+\alpha'_{2}.
\]
Thus, by  Proposition \ref{product-carleson},
\begin{equation*}
|[h,{_{s+t}}T_{\kappa}f]_{b^2_{s+t}}|\lesssim \int_{\mathbb{B}}|h(x)||f(x)| \, d\kappa(x)\lesssim \|f\|_{b^{\infty}_{\alpha_{1}+t}}\|h\|_{b^{1}_{\alpha'_{2}}}.
\end{equation*}
Hence ${_{s+t}}T_{\kappa}$ is bounded from $b^{\infty}_{\alpha_{1}+t}$ to $b^{\infty}_{\alpha_{2}+t}$.

\subsection{Proof of Theorem \ref{Theorem-2} } \label{proof2}

In this subsection we will prove Theorem \ref{Theorem-2}. Note again that by Theorem \ref{intertwining relation}, ${_{s,t}}T_{\mu}$ is compact from $b^{\infty}_{\alpha_{1}}$ to $b^{\infty}_{\alpha_{2}}$ if and only if  ${_{s+t}}T_{\kappa}$ is compact from $b^{\infty}_{\alpha_{1}+t}$ to $b^{\infty}_{\alpha_{2}+t}$.

\textbf{(i) Implies (ii).}
 Since ${_{s+t}}T_{\kappa}$ is compact, then $\|{_{s+t}}T_{\kappa}f_{k}\|_{b^{\infty}_{\alpha_{2}+t}}\to 0$ for any bounded sequence $\{f_{k}\}$ in $b^{\infty}_{\alpha_{1}+t}$ converging to zero uniformly on compact subsets of $\mathbb{B}$. Let $\{a_{k}\}\subset \mathbb{B} $ with $|a_{k}|\to 1^{-}$ and consider the functions
\begin{equation*}
 f_{k}(x)= (1-|a_{k}|^{2})^{n+s-\alpha_{1}}R_{s+t}(x,a_{k}).
\end{equation*}
Due to the conditions on $s$ and Lemma \ref{R-alpha}, since $[x,y]\gtrsim(1-|x|^2) $ and $[x,y]\gtrsim (1-|y|^2) $ for $x,y \in \mathbb{B}$, we have $\sup_{k}\|f_{k}\|_{b^{\infty}_{\alpha_{1}+t}} <\infty$, and it is obvious that $f_{k}$ converges to zero uniformly on compact subsets of $\mathbb{B}$. Hence $\|{_{s+t}}T_{\kappa}f_{k}\|_{b^{\infty}_{\alpha_{2}+t}}\to 0$. Therefore, proceeding as (i) Implies (ii) in Theorem \ref{Theorem-1}, for any $\delta>0$, we get
\begin{align*}
\widehat{\kappa}_{\gamma,\delta}(a_{k})
&\lesssim (1-|a_{k}|^{2})^{2(n+s+t)-(n+\gamma)-(n+s-\alpha_{1})}{_{s+t}}T_{\kappa}f_{k}(a_{k})\\
&= (1-|a_{k}|^{2})^{\alpha_{2}+t}{_{s+t}}T_{\kappa}f_{k}(a_{k})\\
&\lesssim \|{_{s+t}}T_{\kappa}f_{k}\|_{b^{\infty}_{\alpha_{2}+t}}\to 0.
\end{align*}
Thus, by Theorem \ref{Carleson-Besov3}, the measure $\kappa$ be a vanishing $\delta$-Carleson measure.

\textbf{(ii) Implies (i).}
Now suppose $(ii)$ holds, that is, $\kappa$ is a vanishing $\gamma$-Carleson measure. To prove that the operator ${_{s+t}}T_{\kappa}$ is compact, we must show that $\|{_{s+t}}T_{\kappa}f_{k}\|_{b^{\infty}_{\alpha_{2}+t}}\to 0$  for any bounded sequence $\{f_{k}\}$ in $b^{\infty}_{\alpha_{1}+t}$ converging to zero uniformly on compact subsets of $\mathbb{B}$. Similarly, as in the proof of Theorem \ref{Theorem-1}, by duality  we have (the number $\alpha'_{2}$ is the one defined by (\ref{firstss-implies2})
\begin{align*}
\|{_{s+t}}T_{\kappa}f_{k}\|_{b^{\infty}_{\alpha_{2}+p_{2}t}}&\lesssim \sup_{\|h\|_{b^{1}_{\alpha'_{2}}}\leq 1}|[h,{_{s+t}}T_{\kappa}f_{k}]_{b^{2}_{s+t}}|\\
&\leq \sup_{\|h\|_{b^{1}_{\alpha'_{2}}}\leq 1}\int_{\mathbb{B}} |h(x)||f_{k}(x)| d\kappa(x).
\end{align*}
Let $\kappa_{r}=\kappa|_{\mathbb{B}\backslash\mathbb{\overline{B}}_{r}}$, where $0<r<1$ and $\mathbb{B}_{r}=\{x\in \mathbb{B}: |x|<r\}$. Then $\kappa_{r}$ is also a $\gamma$-Carleson measure, and
\begin{equation*}
\lim_{r\to 1} \| \kappa_{r}\|_{\gamma}=0
\end{equation*}
(see, p.$130$ of \cite{CM}).
Let $\vartheta_{r}$ be the measure defined by $$d\vartheta_{r}=\frac{|f_{k}| d\kappa_{r}(x)}{\|f_{k}\|_{b^{\infty}_{\alpha_{1}+t}}}.$$ Using Proposition \ref{product-carleson}, it is easy to check that, $\vartheta_{r}$ is a $\alpha'_{2}$-Carleson measure on $b^{1}_{\alpha'_{2}}$, and
\begin{equation*}
\lim_{r\to 1} \| \vartheta_{r}\|_{\alpha'_{2}}=0.
\end{equation*}
Hence,
\begin{align}\label{B-compactBr}
\int_{\mathbb{B}\backslash\mathbb{\overline{B}}_{r}} |h(x)||f_{k}(x)|d\kappa(x)
&\leq \int_{\mathbb{B}} |h(x)||f_{k}(x)|\kappa_{r}(x)\nonumber\\
&= \|f_{k}\|_{b^{\infty}_{\alpha_{1}+t}} \int_{\mathbb{B}} |h(x)| d\vartheta_{r} \nonumber\\
&\lesssim  \| \vartheta_{r}\|_{\alpha'_{2}}\|f_{k}\|_{b^{\infty}_{\alpha_{1}+t}}\|h\|_{b^{1}_{\alpha'_{2}}} \nonumber\\
&\lesssim  \epsilon
\end{align}
as $r$ sufficiently close to $1$. Fix such an $r$. Since  $\{f_{k}\}$  converges to $0$ uniformly in compact subsets of $\mathbb{B}$, there is a constant $K>0$ such that for any $k>K$, $|f_{k}(x)|<\epsilon$ for any $x\in \mathbb{\overline{B}}_{r}$.
Therefore, using Proposition \ref{product-carleson}, we have
\begin{align}\label{compactBr}
\int_{\mathbb{\overline{B}}_{r}} |h(x)||f_{k}(x)|d\kappa(x)
&\leq \epsilon \int_{\mathbb{B}} |1||h(x)|\kappa(x) \nonumber\\
&\lesssim \epsilon \|1\|_{b^{\infty}_{\alpha_{1}+t}}\|h\|_{b^{1}_{\alpha'_{2}}}\lesssim \epsilon
\end{align}
for any $x\in \mathbb{\overline{B}}_{r}$. Combining (\ref{B-compactBr}) and (\ref{compactBr}), we get that
\begin{equation*}
 \sup_{\|h\|_{b^{1}_{\alpha'_{2}}}\leq 1}\int_{\mathbb{B}} |h(x)||f_{k}(x)| d\kappa(x) \to 0.
\end{equation*}
Thus, $\|{_{s+t}}T_{\kappa}f_{k}\|_{b^{\infty}_{\alpha_{2}+p_{2}t}}\to 0$, finishing the proof.


\end{document}